 \newtheorem{theorem}{Theorem}[section]
 \newtheorem{lemma}[theorem]{Lemma}
 \newtheorem{corollary}[theorem]{Corollary}
 \newtheorem{example}[theorem]{Example}
\newcommand{\fms}{\mathrm{fms}}
\newcommand{\pfms}{\mathrm{pfms}}
\newcommand{\SG}{\mathcal{S}(G)}
\newcommand{\SGp}{\mathcal{S}_+(G)}
\newcommand{\Mp}{\operatorname{M}_+}
\newcommand{\nul}{\operatorname{null}}
\newcommand{\M}{\operatorname{M}}
\DeclareMathOperator{\T}{T}
\DeclareMathOperator{\CC}{cc}
\begin{document}

\title{On the Complexity of the Positive Semidefinite Zero Forcing Number}

\author{
  Shaun Fallat
    \thanks{Department of Mathematics and Statistics, University of Regina,
    Regina, Saskatchewan S4S 0A2, Canada. Research supported in part by an NSERC Discovery Research Grant,
    Application No.: RGPIN-2014-06036.
       Email: \texttt{Shaun.Fallat@uregina.ca}.}$\,$ \thanks{Corresponding Author}
  \and
  Karen Meagher
    \thanks{Department of Mathematics and Statistics, University of Regina,
    Regina, Saskatchewan S4S 0A2, Canada. Research supported in part by an NSERC Discovery Research Grant,
    Application No.: RGPIN-341214-2013
       Email: \texttt{Karen.Meagher@uregina.ca}.}
  \and
  Boting Yang
    \thanks{Department of Computer Science, University of Regina,
        Regina, Saskatchewan S4S 0A2, Canada. Research supported in part by an NSERC Discovery Research Grant, Application No.: RGPIN-2013-261290.
        Email: \texttt{Boting.Yang@uregina.ca}.}
}

\maketitle

\begin{abstract}
  The positive zero forcing number of a graph is a graph parameter
  that arises from a non-traditional type of graph colouring, and is related to a more
  conventional version of zero forcing. 
  We establish a relation between the zero forcing and the fast-mixed searching,
  which implies some NP-completeness results for the zero forcing problem. 
  For chordal graphs much is understood regarding the relationships
  between positive zero forcing and clique coverings. Building upon
  constructions associated with optimal tree covers and forest covers,
  we present a linear time algorithm for computing the positive zero forcing
  number of chordal graphs.
  We also prove that it is NP-complete to determine if a graph has a
  positive zero forcing set with an additional property.

 \vspace{.4cm}

\noindent {\em Keywords:}
Positive zero forcing number, 
 clique cover number, chordal graphs, computational complexity 

\vspace{.4cm}
\noindent {\em AMS Subject Classifications:} 05C35, 05C50, 05C78, 05C85 68R10.  
\end{abstract}

\section{Introduction}

The zero forcing number of a graph was introduced in \cite{MR2388646}
and related terminology was extended in \cite{zplus}. First and
foremost, the interest in this parameter has been on applying zero
forcing as a bound on the maximum nullities (or, equivalently, the
minimum rank) of certain symmetric matrices associated with graphs,
although this parameter has been considered elsewhere see, for
example, \cite{param, Yeh}.  Independently, physicists have studied
this parameter, referring to it as the graph infection number, in
conjunction with control of quantum systems \cite{burgarth2007full,
  Sev}.

The same notion arises in computer science within the
context of fast-mixed searching~\cite{Yang}.  At its most basic level,
edge search and node search models represent two significant graph
search problems \cite{Megi88,KiPa86}. Bienstock and Seymour~\cite{BS91} 
introduced the mixed search problem that combines the edge search 
and the node search problems. Dyer et al. \cite{DYY08} introduced
the fast search problem. Recently, a fast-mixed search model was introduced in
an attempt to combine fast search and mixed search models~\cite{Yang}. For this
model, we assume that the simple graph $G$ contains a single
\textsl{fugitive}, invisible to the searchers, that can move at any
speed along a ``searcher-free'' path and hides on vertices or along
edges. 
In this case, the minimum number of searchers required to 
capture the fugitive is called the \textsl{fast-mixed search number of
  $G$}. As we will see, the fast-mixed search number and the zero
forcing number of $G$ are indeed equal.

Suppose that $G$ is a simple finite graph with vertex set $V=V(G)$ and edge
set $E=E(G)$.  We begin by specifying a set of initial vertices of the
graph (which we say are coloured black, while all other vertices are
white). Then, using a designated colour change rule applied to these
vertices, we progressively change the colour of white vertices in the
graph to black. Our colouring consists of only two colours (black and
white) and the objective is to colour all vertices black by repeated
application of the colour change rule to our initial set. In general,
we want to determine the smallest set of vertices needed to be black
initially, to eventually change all of the vertices in the graph to
black.

The conventional zero forcing rule results in a partition of the vertices
of the graph into sets, such that each such set induces a path in $G$. 
Further, each of the initial black
vertices is an end point of one of these paths. More recently a
refinement of the colour change rule, called the positive zero forcing
colour change rule, was introduced. Using this rule, the positive
semidefinite zero forcing number was defined (see, for example,
\cite{zplus, ekstrand2011positive, ekstrand2011note}).  When the
positive zero forcing colour change rule is applied to a set of
initial vertices of a graph, the vertices are then partitioned into sets, so that
each such set induces a tree in $G$.

As mentioned above, one of the original motivations for studying these
parameters is that they both provide an upper bound on the
 maximum nullity of both symmetric and positive
semidefinite matrices associated with a graph (see \cite{zplus,
  param}). For a given graph $G=(V,E)$, define
\[ 
\SG = \{ A=[a_{ij}] : A=A^{T}, \; {\rm for} \; i \neq j, a_{ij} \neq 0 \; {\rm if \; and \; only\;  if}\;  \{i,j\} \in E(G)\}
\]
and let $\SGp$ denote the subset of positive semidefinite matrices in $\SG$. 
We use $\nul (B)$ to denote the nullity of the matrix $B$.
The \textsl{maximum nullity} of $G$ is defined to be  
$\M(G) = \max\{\nul (B) : B \in \SG \}$, and , similarly, 
$\Mp(G) = \max \{  \nul (B) : B \in \SGp \},$ is called the \textsl{maximum positive semidefinite nullity of $G$}.

The zero forcing number has been studied under the alias, the
fast-mixed searching number, the complexity of computing the zero
forcing is generally better understood. Consequently, our focus will
be on the algorithmic aspects of computing the positive semidefinite zero
forcing number for graphs. As with most graph parameters, defined in
terms of an optimization problem, these parameters are complicated to
compute in general. However, some very interesting exceptions arise
such as the focus of this paper, chordal graphs, whose positive zero forcing
number can be found in linear time. However, when we consider a variant of the positive
zero forcing problem, called the min-forest problem, we will show that this variant is 
NP-complete even for echinus graphs, which  are a special type of split graphs. Recall that  a
chordal graph is split if its complement is also chordal.

\section{Preliminaries}

Throughout this paper, we only consider finite graphs with
no loops or multiple edges. We use $G=(V,E)$ to denote a graph with
vertex set $V$ and edge set $E$, and we also use $V(G)$ and $E(G)$
to denote the vertex set and edge set of $G$ respectively. We use
$\{u, v\}$ to denote an edge with endpoints $u$ and $v$.
For a graph $G=(V,E)$ and $v \in V$, the
vertex set $\{u: \{u,v\} \in E\}$ is the {\em neighbourhood} of $v$, denoted as $N_G(v)$. For
$V' \subseteq V$, the vertex set $\{x: \{x,y\} \in E, x \in V \setminus V'$ and $y\in V' \}$ is the {\em neighbourhood} of $V'$, denoted as $N_G(V')$.
We use $G[V']$ to denote
the subgraph induced by $V'$, which consists of all
vertices of $V'$ and all of the edges that connect
vertices of $V'$ in $G$.
We use $G-v$ to denote the subgraph
induced by $V \setminus \{v\}$.

Let $G$ be a graph in which every vertex is initially coloured either
black or white. If $u$ is a black vertex of $G$ and $u$ has exactly
one white neighbour, say $v$, then we change the colour of $v$ to
black; this rule is called the \textsl{colour change rule}. In this
case we say ``$u$ forces $v$'' and denote this action by
{$u\rightarrow v$}. Given an initial colouring of $G$, in which
a set of the vertices is black and all other vertices are white, the
\textsl{derived set} is the set of all black vertices, including the
initial set of black vertices, resulting from repeatedly applying the
colour change rule until no more changes are possible. If the derived
set is the entire vertex set of the graph, then the set of initial
black vertices is called a \textsl{zero forcing set}. The \textsl{zero
  forcing number} of a graph $G$ is the size of the smallest zero
forcing set of $G$; it is denoted by $Z(G)$. The procedure of colouring a graph using the
colour rule is called a \textsl{zero forcing process} or simply a
\textsl{forcing process}. A zero forcing process is
called \textsl{optimal} if the initial set of black vertices is a zero
forcing set of the smallest possible size.

If $Z$ is a zero forcing set of a graph $G$, then we may produce a
list of the forces in the order in which they are performed in the
zero forcing process.  This list can then be divided into paths, known
as forcing chains. A \textsl{forcing chain} is a sequence of vertices
$(v_1,v_2,\ldots,v_k)$ such that $v_i\rightarrow v_{i+1}$, for
$i=1,\ldots,k-1$ in the forcing process.  In every step of a forcing
process, each vertex can force at most one other vertex; conversely
every vertex not in the zero forcing set is forced by exactly one
vertex.  Thus the forcing chains that correspond to a zero forcing set
partition the vertices of a graph into disjoint sets, such that each set induces a path. The
number of these paths is equal to the size of the zero forcing set and
the elements of the zero forcing set are the initial vertices of the
forcing chains and hence end points of these paths (see
\cite[Proposition 2.10]{zplus} for more details). We observe that the 
concept of clearing an edge in the fast-mixed model is equivalent to the notion
of a black vertex forcing a unique white neighbour. Hence fast-mixed searching
and the zero forcing colour change rule are equivalent.

The most widely-studied variant of the zero forcing number is called
positive semidefinite zero forcing or the positive zero forcing
number, and was introduced in \cite{zplus}, see also
\cite{ekstrand2011positive} and \cite{ekstrand2011note}. The
positive zero forcing number is also based on a colour change rule
similar to the zero forcing colour change rule.  Let $G$ be a graph
and $B$ a set of vertices; we will initially colour the vertices of
$B$ black and all other vertices white. Let $W_1,\dots,W_k$ be the
sets of vertices in each of the connected components of $G$ after
removing the vertices in $B$.  If $u$ is a vertex in $B$ and $w$ is
the only white neighbour of $u$ in the graph induced by the subset of
vertices $W_i\cup B$, then $u$ can force the colour of $w$ to
black. This is the \textsl{positive colour change rule}.  The
definitions and terminology for the positive zero forcing process,
such as, colouring, derived set, positive zero forcing number etc.,
are similar to those for the zero forcing number, except we use the
positive colour change rule.

The size of the smallest positive zero forcing set of a graph $G$ is
denoted by $Z_+(G)$. Also for all graphs $G$,
since a zero forcing set is also a positive zero forcing set we have
that $Z_+(G) \leq Z(G)$. Moreover, in \cite{zplus} it was shown that
$\Mp(G) \leq Z_{+}(G)$, for any graph $G$.

As noted above, applying the zero forcing colour change rule to the
vertices of a graph produces a path covering of the vertices in that graph. 
Analogously, applying the positive colour change rule produces
a set of vertex disjoint induced trees in the graph,  referred to as
\textsl{forcing trees}.  Next we define a \textsl{positive zero forcing tree cover}.
Suppose $G$ be a graph and let
$Z$ be a positive zero forcing set for $G$. 
Observe that applying the
colour change rule once, two or more vertices can perform forces at
the same time, and a vertex can force multiple vertices from different
components at the same time.  
For each vertex in $Z$, these forces determine a rooted induced tree. The root of 
each tree is the vertex in $Z$ and two vertices are adjacent if one of them forces the other.

In particular, for any 
such tree, starting at the root and following along the edges of will describe the 
chronological list of forces from this root.

More generally, a \textsl{tree covering of a graph} is a family of induced vertex
disjoint trees in the graph that cover all vertices of the graph. The
minimum number of such trees that cover the vertices of a graph $G$ is
the \textsl{tree cover number} of $G$ and is denoted by $\T(G)$. Any set
of zero forcing trees corresponding to an optimal positive zero forcing set
is of size $Z_+(G)$. Hence, for
any graph $G$, we have $\T(G)\leq Z_+(G)$.

Throughout this paper we will use the term \textsl{optimal} to refer to
the object of the smallest possible size.  For example, we will
consider both \textsl{optimal tree covering} and \textsl{optimal zero
  forcing tree covering} (these have the fewest possible number of trees)
and also \textsl{optimal clique covers} (a clique cover with the fewest
cliques).

In this paper we will focus on \textsl{chordal graphs}. A graph is
chordal if it contains no induced cycles on four or more
vertices. Further, we say a vertex $v$ is \textsl{simplicial}, if the
graph induced by the neighbours of $v$ forms a complete graph (or a
clique). If $\{v_1,v_2, \dots, v_n\}$ is an ordering of the vertices
of a graph $G$, such that the vertex $v_i$ is simplicial in the graph
$G - \{v_1,v_2, \dots ,v_{i-1}\}$, then $\{v_1,v_2, \dots,
v_n\}$ is called a \textsl{perfect elimination ordering}. Every chordal graph
has an ordering of the vertices that is a \textsl{perfect elimination
  ordering}, see also \cite{BPe, G}.

In general it is known for any chordal graph $G$, that
$\Mp(G) = |V(G)| - cc(G)$, where $cc(G)$ denotes the fewest number of
cliques needed to cover (or to include) all the edges in $G$ (see
\cite{MN}). 
 From~\cite{zplus} and \cite{psd1} we
  know that for any graph $G$, 
\begin{align} \label{eq:ccbound}
|V(G)| - cc(G) \leq \Mp(G) \leq Z_{+}(G).
\end{align}

This number, $cc(G)$, is often referred to as the \textsl{clique cover number} of the graph $G$. Further inspection of the
work in \cite{MN} actually reveals that, in fact, for any chordal
graph, $cc(G)$ is equal to the \textsl{ordered set number} ($OS(G)$) of
$G$. In \cite{zplus}, it was proved that for any graph $G$, the
ordered set number of $G$ and the positive zero forcing number of $G$
are related and satisfy, $Z_{+}(G)+ OS(G) = |V(G)|$. As a consequence, we
have that $\Mp(G)=Z_{+}(G)$ for any chordal graph $G$, and, in
particular, $Z_{+}(G) = |V(G)|-cc(G)$. So the value of  the positive zero
forcing number of chordal graphs may be deduced by determining the
clique cover number and vice-versa.
 
%
%

\section{The Complexity of Zero Forcing}

Let $G$ be a connected graph. In the
fast-mixed search model, $G$ initially contains no searchers and it
contains only one fugitive who hides on vertices or along edges.
The fugitive is invisible to searchers, and he can move at any rate and 
at any time from one vertex to another vertex along a
searcher-free path between the two vertices.
An edge (resp. a vertex) where the
fugitive may hide is said to be {\em contaminated}, while an edge (resp. a vertex)
where the fugitive cannot hide is said to be {\em cleared}.
A vertex is said to be {\em occupied} if it has a searcher on it.
There are two types
of actions for searchers in each step of the fast-mixed search model:
\begin{enumerate}
\item
a searcher can be placed on a contaminated vertex, or
\item
a search may slide along a contaminated edge $\{u,v\}$ from $u$ to $v$ if $v$ is
contaminated and all edges incident on $u$ except $\{u,v\}$ are cleared.
\end{enumerate}

In the fast-mixed search model, a contaminated edge becomes cleared
if both endpoints are occupied by searchers or if a searcher slides
along it from one endpoint to the other. 
The graph $G$ is {\em cleared} if all edges
are cleared. The minimum number of searchers required to clear $G$
(i.e., to capture the fugitive)
is the {\em fast-mixed search number} of $G$, denoted by $\fms(G)$.
We first show that the fast-mixed search number of a graph is equal to its zero forcing number.

\begin{theorem}\label{thm:fms=Z}
For any graph $G$, $\fms(G)=Z(G)$.
\end{theorem}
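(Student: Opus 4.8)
The plan is to prove the two inequalities $\fms(G)\le Z(G)$ and $Z(G)\le\fms(G)$ separately, using the dictionary already hinted at in the preliminaries: an occupied vertex (more precisely, one that has been occupied at some moment) plays the role of a black vertex, a contaminated vertex the role of a white one, and a searcher sliding off a vertex $u$ along $\{u,v\}$ — which, by the precondition, clears the last contaminated edge at $u$ — plays the role of $u$ forcing its unique white neighbour $v$. A useful preliminary observation is that every cleared edge had \emph{both} of its endpoints occupied at some moment of the search: this is immediate from the clearing rule, since an edge becomes cleared either when both endpoints are simultaneously occupied or when a searcher slides across it (occupying both endpoints in consecutive steps). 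Since both parameters are additive over connected components, we may assume $G$ is connected.

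For $\fms(G)\le Z(G)$, fix a minimum zero forcing set $B$ and its forcing chains $P_1,\dots,P_{Z(G)}$, which partition $V(G)$ into paths, each $P_i$ beginning at a vertex of $B$. Place one searcher on the first vertex of each $P_i$; because all searchers are on the board simultaneously this immediately clears every edge inside $B$. Now fix any linear ordering of the forces consistent with the forcing process, and realise the force $u\to v$ (with $u,v$ consecutive on some $P_i$) by sliding the searcher of $P_i$ from $u$ to $v$. Each such slide is legal: its target $v$ is still contaminated since $v$ has not yet been forced, and every neighbour $x\ne v$ of $u$ is black at the instant $u\to v$ fires (colour change rule), so the searcher of $x$'s chain has already reached $x$. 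If that searcher has not yet left $x$, the edge $\{u,x\}$ is cleared now by co-occupancy; if it has already left $x$, then at the earlier step where it left, $u$ was a non-target neighbour of $x$, hence $u$ was black then, hence $u$'s searcher had already arrived, so $\{u,x\}$ was cleared by co-occupancy at that earlier step. Either way all edges at $u$ except $\{u,v\}$ are cleared before the slide, so the slide is legal and clears $\{u,v\}$. Running through all forces, every chain edge is cleared by a slide and every other edge by one of these co-occupancies (if both endpoints are chain-ends, both searchers eventually rest on them simultaneously). Hence $Z(G)$ searchers clear $G$, so $\fms(G)\le Z(G)$.

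For $Z(G)\le\fms(G)$, fix an optimal fast-mixed search and let $S$ be the set of vertices on which a searcher is ever placed; since no searcher is ever removed there are at most $\fms(G)$ placements, so $|S|\le\fms(G)$. We claim $S$ is a zero forcing set. Colour $S$ black and process the slides in chronological order, turning each slide from $u$ to $v$ into the force $u\to v$ (skipping it if $v$ is already black). Such a force is legal: $u$ is currently occupied, hence was occupied earlier, hence was placed on or slid onto earlier, hence is already black; and for every neighbour $x\ne v$ of $u$ the edge $\{u,x\}$ is cleared at this moment, so by the preliminary observation $x$ was occupied earlier and is therefore already black — so $u$'s unique white neighbour is $v$. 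Thus the derived set of $S$ comes to contain every slide target, and since every edge of $G$ is cleared at the end, every vertex was occupied at some point and hence lies in $S$ or is a slide target. So the derived set is all of $V(G)$, $S$ is a zero forcing set, and $Z(G)\le|S|\le\fms(G)$.

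Combining the two inequalities yields $\fms(G)=Z(G)$. The part needing the most care is the forward direction's scheduling claim — verifying that, whatever order the forces are realised in, each ``cross'' edge is cleared by co-occupancy just before one of its endpoints is vacated — together with making the informal correspondence between the fast-mixed model and zero forcing fully rigorous, in particular controlling recontamination when a searcher leaves a vertex.
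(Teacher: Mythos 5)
Your proof is correct and follows essentially the same route as the paper's: both directions translate between placements and initial black vertices, and between slides and forces, establishing $\fms(G)\le Z(G)$ and $Z(G)\le\fms(G)$ separately. You simply carry out the legality checks (cleared cross edges via co-occupancy, the observation that every cleared edge has both endpoints occupied at some time) in more detail than the paper does.
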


\begin{proof}
We first show that $\fms(G) \leq Z(G)$. Let $B$ be a zero forcing set of $G$.
We now construct a fast-mixed search strategy. Initially we place one
searcher on each vertex of $B$. After these placings, all edges whose
two endpoints are occupied are cleared. If $u$ is a black vertex of $G$ 
and $u$ has exactly one white neighbour, say $v$, then 
$u$ must be an occupied vertex, $v$ and $\{u, v\}$ must be contaminated,
and all edges incident with $u$ except $\{u,v\}$ are cleared. 
Thus, we can slide the searcher on vertex $u$
to vertex $v$ along the edge $\{u,v\}$. After this sliding action,  
the edges between $v$ and the occupied neighbours of $v$
are cleared. Similarly, for each forcing action $x\rightarrow y$,
we can construct a fast-mixed search action by sliding the searcher on vertex $x$
to vertex $y$ along the edge $\{x,y\}$. Since $B$ is a zero forcing set of $G$,
the zero forcing process can change all white vertices to black vertices.
Thus, the corresponding fast-mixed search strategy can clear all vertices
and edges of $G$. Hence, $\fms(G) \leq Z(G)$.

We next show that $\fms(G) \geq Z(G)$. 
Let $S$ be a fast-mixed search strategy that clears $G$ 
using $\fms(G)$ searchers. Let $B$ be the set of vertices on which a 
searcher is placed (not slid to). 
Since a searcher can be placed only on a contaminated vertex,
we know that $|B|=\fms(G)$. We colour all vertices of $B$ black and 
all other vertices white. For a sliding action in $S$ that slides a searcher on a vertex, say $x$,
to a vertex, say $y$, along the edge $\{x,y\}$, $x$ is black and $y$ is the only white
neighbour of $x$ at the current stage. Thus, $x$ forces $y$ to black.
So we can construct a zero forcing process that corresponds to the fast-mixed search 
strategy $S$ such that all vertices of $G$ are black when $G$ is cleared by $S$.
Therefore $\fms(G) \geq Z(G)$.
\end{proof}

%



From Theorem~\ref{thm:fms=Z} and \cite[Thms 6.3, 6.5, Cor. 6.6]{Yang}, respectively, we have the following results.

\begin{corollary}\label{cor:NPC1}
Given a graph $G$ and a nonnegative integer $k$, it is NP-complete 
to determine whether $G$ has a zero forcing process with $k$ 
initial black vertices such that all initial black vertices are leaves of $G$.
This problem remains NP-complete for planar graphs with
maximum degree 3.
\end{corollary}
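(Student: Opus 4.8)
The plan is to push the statement through the equivalence of Theorem~\ref{thm:fms=Z}. The crucial point I would use is that the two constructions in its proof operate on a \emph{fixed} graph $G$ and set up a correspondence under which the set of initially black vertices of a zero forcing process is exactly the set of vertices on which a searcher is \emph{placed} (rather than slid onto) in the associated fast-mixed search strategy, and conversely. Hence $G$ admits a zero forcing process with $k$ initial black vertices all of which are leaves of $G$ if and only if $G$ can be cleared by a fast-mixed search strategy with $k$ searchers each of which is initially placed on a leaf of $G$. Since being a leaf is an intrinsic property of $G$ and $G$ is not altered by either construction, this equivalence should follow immediately from the ``$\fms(G)\le Z(G)$'' direction (place one searcher on each black vertex, and on no other vertex) and the ``$\fms(G)\ge Z(G)$'' direction (take $B$ to be precisely the placed-searcher vertices).

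For membership in NP, I would exhibit as a certificate the set of at most $k$ initial black vertices together with an ordering of the forces; one verifies in polynomial time that every vertex in the initial set has degree $1$, that each force in the list is legal when it is performed, and that the resulting derived set is all of $V(G)$. For NP-hardness, I would invoke \cite[Thms 6.3, 6.5, Cor. 6.6]{Yang}, which establish that deciding whether a graph — and, more restrictively, a planar graph of maximum degree $3$ — admits a fast-mixed search strategy with $k$ searchers all starting on leaves is NP-complete. Taking the identity map on graphs as the reduction, the equivalence of the previous paragraph turns any such instance into an equivalent instance of the zero forcing version, and since the map is the identity it preserves planarity and the maximum-degree bound; this would give both sentences of the corollary.

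The one step I expect to require care is confirming that the correspondence in Theorem~\ref{thm:fms=Z} genuinely restricts to a bijection between the two \emph{constrained} families, with no hidden slack: in the proof, the ``$\le$'' direction places exactly the vertices of the zero forcing set $B$ and slides along all remaining forces, so it never places a searcher on a non-leaf when $B$ consists of leaves; and the ``$\ge$'' direction reads $B$ off from the placements, which are all leaves by hypothesis. Once this bookkeeping is checked, no further combinatorial work should be needed — in particular no new gadget or graph modification — since all of the hardness is imported directly from \cite{Yang}.
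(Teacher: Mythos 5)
Your proposal is correct and follows essentially the same route as the paper: the corollary is obtained by combining Theorem~\ref{thm:fms=Z} with the corresponding NP-completeness result of \cite{Yang} (Theorem~6.3 there), noting that the correspondence in the proof of Theorem~\ref{thm:fms=Z} identifies the initial black vertices with exactly the placed-searcher vertices, so the leaf constraint, planarity, and the degree bound are preserved. The paper states this transfer without elaboration; your bookkeeping of why the constrained families correspond is precisely the implicit content.
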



\begin{corollary}\label{cor:NPC2-2}
Given a graph $G$ with $\ell$ leaves, it is NP-complete 
to determine whether $Z(G)= \lceil \ell /2 \rceil$. 
This problem remains NP-complete for
graphs with maximum degree 4.
\end{corollary}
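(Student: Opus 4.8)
The plan is to obtain this statement as an immediate consequence of Theorem~\ref{thm:fms=Z}. That theorem gives $\fms(G)=Z(G)$ for every graph $G$, and the number of leaves $\ell$ of $G$ is a structural quantity of the input that is untouched by this identification. Hence the decision problem ``given $G$ with $\ell$ leaves, is $Z(G)=\lceil \ell/2\rceil$?'' is literally the same problem as ``given $G$ with $\ell$ leaves, is $\fms(G)=\lceil \ell/2\rceil$?'' --- the same instances, the same yes/no answers. The latter is shown to be NP-complete, and to remain so on graphs of maximum degree $4$, in \cite[Thms 6.3, 6.5, Cor. 6.6]{Yang}; so the polynomial-time many-one reduction exhibited there, together with the degree bound it achieves, transfers verbatim to the zero forcing formulation. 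I would present this in essentially one sentence after recalling Theorem~\ref{thm:fms=Z}.

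For completeness I would also spell out membership in NP directly in the zero forcing language, so that this half of the corollary does not depend on the fast-mixed model. A candidate set $B$ of size $k=\lceil \ell/2\rceil$ is guessed, and the colour change rule is then applied greedily --- at each stage scanning all black vertices for one with a unique white neighbour --- which runs in time polynomial in $|V(G)|$; then $B$ is a zero forcing set if and only if the derived set is all of $V(G)$. One clean way to package this uses the forcing-chain facts recorded before \cite[Proposition 2.10]{zplus}: every leaf of $G$ is necessarily an endpoint of a forcing chain (a degree-one vertex can be forced, but can never itself force), and each forcing chain has at most two endpoints while contributing exactly one vertex to the zero forcing set; therefore $Z(G)\ge\lceil \ell/2\rceil$ for every graph, and the equality ``$Z(G)=\lceil \ell/2\rceil$'' coincides with the inequality ``$Z(G)\le\lceil \ell/2\rceil$'', which is patently in NP via the certificate just described (note that $\lceil \ell/2\rceil$ is computable from $G$ in linear time).

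The only genuinely substantive content is the NP-hardness, and there I would simply invoke \cite{Yang} rather than re-derive it; reproving it from scratch would be the hard part. A self-contained reduction would have to start from an NP-hard problem and build, in polynomial time, a graph whose leaf count and maximum degree are simultaneously controlled while $Z(G)$ is pinned to exactly $\lceil \ell/2\rceil$ precisely on yes-instances --- the delicate point being to fix the parameter at the ceiling of half the number of leaves rather than merely to bound it. Since Theorem~\ref{thm:fms=Z} lets us sidestep all of this, I would not attempt it here.
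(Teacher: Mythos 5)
Your proposal is correct and is essentially the paper's own proof: the paper derives this corollary in one line from Theorem~\ref{thm:fms=Z} together with the corresponding NP-completeness result for fast-mixed searching in \cite{Yang}, exactly as you do. One small slip in your supplementary remarks: a black leaf with a white neighbour \emph{can} perform a force, so the correct reason every leaf is a chain endpoint is that a degree-one vertex cannot be interior to a forcing chain (that would require two distinct neighbours), not that it ``can never itself force''; the lower bound $Z(G)\ge\lceil \ell/2\rceil$ still follows.
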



\begin{corollary}\label{cor:NPC3}
Given a graph $G$ and a nonnegative integer $k$, it is NP-complete 
to determine whether $Z(G) \leq k$. 
This problem remains NP-complete
even for 2-connected (biconnected) graphs with maximum degree 4.
\end{corollary}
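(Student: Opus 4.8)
The plan is to prove both halves of the NP-completeness claim, with the hardness half obtained by transporting the known complexity of the fast-mixed search number through Theorem~\ref{thm:fms=Z}. For membership in NP, a certificate for a ``yes'' instance $(G,k)$ is simply a vertex set $B$ with $|B|\le k$: colour $B$ black, repeatedly apply the colour change rule, and accept iff the derived set is $V(G)$. Each sweep through the vertices performs at most $|V(G)|$ forces and there are at most $|V(G)|$ sweeps before the process stabilises, so the derived set is computable in polynomial time; hence the decision problem $Z(G)\le k$ lies in NP, and this already covers the restricted instances as well.

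For hardness I would invoke \cite[Thms 6.3, 6.5, Cor. 6.6]{Yang}, which state that deciding whether $\fms(G)\le k$ is NP-complete and remains NP-complete for $2$-connected graphs of maximum degree $4$. Theorem~\ref{thm:fms=Z} gives $\fms(G)=Z(G)$ for every graph $G$, so the identity map $(G,k)\mapsto(G,k)$ is a polynomial-time many-one reduction from the fast-mixed search decision problem to the zero forcing decision problem. Because this reduction does not alter $G$ at all, every structural hypothesis placed on the source instances—in particular $2$-connectedness and maximum degree $4$—is inherited verbatim by the target instances, which yields the strengthened statement.

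The only thing requiring care, and hence the main (minor) obstacle, is bookkeeping against \cite{Yang}: one must confirm that the instances constructed there are phrased with an upper-bound threshold $\fms(G)\le k$ rather than an exact value, and that they are connected, so that the equivalence with $Z(G)$ furnished by Theorem~\ref{thm:fms=Z} applies cleanly to each produced instance. Since $2$-connected graphs are in particular connected, this last point is automatic for the restricted class; once the threshold phrasing is matched up, Corollary~\ref{cor:NPC3} follows immediately by composing the reduction with the chain $\fms(G)=Z(G)$.
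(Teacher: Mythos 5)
Your proposal is correct and matches the paper's approach: the paper derives this corollary (and its two companions) directly from Theorem~\ref{thm:fms=Z} together with the corresponding NP-completeness results for $\fms$ in \cite{Yang}, exactly the identity reduction you describe. Your added remarks on NP membership and on the preservation of the structural restrictions are just the routine details the paper leaves implicit.
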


At the end of this section, we introduce a searching model, which is an extension of the fast-mixed searching, that corresponds to positive zero forcing. This searching model, called the
\emph{parallel fast-mixed searching}, follows the same setting as the fast-mixed searching except that 
the graph may be split into subgraphs after each placing or sliding action, in such a way that these subgraphs may be cleared in a parallel-like fashion.

Initially, $G$ contains no searchers, and so all vertices of $G$ are contaminated. 
To begin, Let ${\cal G}=\{G\}$. After a placing, 
(e.g., place a searcher on a contaminated vertex $u$), the subgraph $G-u$ is the graph induced by the current contaminated vertices. If $G-u$ is not connected, let $G_1, \dots, G_j$ be all of the connected components of $G-u$. We update ${\cal G}$ by replacing $G$ by subgraphs 
$G[V(G_1)\cup \{u\}], \dots, G[V(G_j)\cup \{u\}]$, 
where $u$ is occupied in each subgraph $G[V(G_i)\cup \{u\}]$, $1\leq i\leq j$.
Consider each subgraph $H \in {\cal G}$ that has not been cleared.
After a placing or sliding action, let $X$ be the set of the contaminated vertices in $H$.
If $H[X]$ is not connected, let
$X_1, \dots, X_j$ be the vertex sets of all connected components of $H[X]$. 
We update ${\cal G}$ by replacing $H$ by subgraphs 
$H[X_1\cup N_H(X_1)\}], \dots, H[X_j\cup N_H(X_j)]$, where  
$N_H(X_i)$ is occupied in each subgraph $H[X_i\cup N_H(X_i)\}]$, $1\leq i\leq j$. 
We can continue this searching and branching process until all subgraphs in ${\cal G}$ are cleared.
It is easy to observe that we can arrange the searching process so that
subgraphs in ${\cal G}$ can be cleared in a parallel-like way. 

The graph $G$ is {\em cleared} if all subgraphs of ${\cal G}$
are cleared. The minimum number of placings required to clear $G$ 
is called the {\em parallel fast-mixed search number} of $G$, and is denoted by $\pfms(G)$.

To illustrate the difference between the parallel fast-mixed searching and the fast-mixed searching, 
let $G_k$ be a unicyclic graph, with $k \geq 4$, with vertex set $V=\{v_0, v_1, \dots,  v_{k-1}, v_k\}$ and  edge set $E=\{\{v_0,v_i\}: i=1,\ldots, k \} \cup \{ \{v_{k-1},v_k\}\}$ (see Figure \ref{fig1}). 
\begin{figure}[h]
\begin{center}
\begin{tikzpicture}

\draw [fill] (0,1)  circle (2pt) node [above] {$v_1$};
\draw [fill] (1,1)  circle (2pt) node [above] {$v_2$};
\draw [fill] (2,1)  circle (2pt) node[above] {$v_3$};
\draw [fill] (3,1)   node {$\dots$};

\draw [fill] (4,1)  circle (2pt) node [above] {$v_{k-1}$};
\draw [fill] (5,1)  circle (2pt) node[above] {$v_k$};

\draw [fill] (3,0)  circle (2pt) node [below] {$v_0$};

\draw [thick] (3, 0) -- (0,1);
\draw [thick] (3, 0) -- (1,1);
\draw [thick] (3, 0) -- (2,1);
\draw [thick] (3, 0) -- (4,1);
\draw [thick] (3, 0) -- (5,1);
\draw [thick] (4,1) -- (5,1);

\end{tikzpicture}
\end{center}
\caption{An example of a graph $G$ with $\pfms(G) < \fms(G)$.}\label{fig1}
\end{figure}
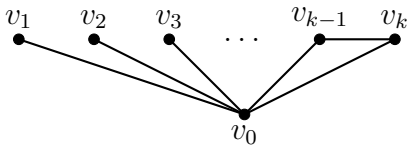
Initially ${\cal G}= \{G_k\}$. After we place a searcher on vertex $v_0$, the set ${\cal G}$ is updated
such that it contains $k-1$ subgraphs, i.e., the edges $\{v_0,v_1\}$, $\dots$,  $\{v_0,v_{k-2}\}$, and the 3-cycle induced by the vertices $\{ v_0, v_{k-1}, v_k\}$, where $v_0$ is occupied by a searcher
in each subgraph. Each edge 
$\{v_0,v_i\}$ ($1 \leq i \leq k-2$), can be cleared by a sliding action.
For the 3-cycle on vertices $\{v_0, v_{k-1}, v_k\}$, since no sliding action can be performed,
we have to place a new searcher on a vertex, say $v_{k-1}$. Since the graph induced by
the contaminated vertices is connected (just  an isolated vertex $v_k$ in this case), 
we do not need to update ${\cal G}$. Now we can slide the searcher on vertex 
$v_{k-1}$ to vertex $v_k$. After the sliding action, the 3-cycle is cleared,
and thus, $G_k$ is cleared. This search strategy contains two placing actions, and it is
easy to see that any strategy with only one placing action cannot cleared $G_k$. Thus 
$\pfms(G_k)=2$. On the other hand, we can easily show that $\fms(G_k)=k-2$.

Similar to Theorem~\ref{thm:fms=Z}, we can prove the following relation between 
the parallel fast-mixed searching and the positive zero forcing.

\begin{theorem}\label{thm:pfms=Z+}
For any graph $G$, $\pfms(G)=Z_+(G)$.
\end{theorem}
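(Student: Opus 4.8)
The plan is to mimic the proof of Theorem~\ref{thm:fms=Z}, establishing the two inequalities $\pfms(G)\le Z_+(G)$ and $\pfms(G)\ge Z_+(G)$ by translating between a positive zero forcing process and a parallel fast-mixed search strategy. The key observation that makes this work is that the branching rule defining the parallel fast-mixed search exactly parallels the connected-component decomposition $W_1,\dots,W_k$ of $G-B$ that appears in the positive colour change rule: after placing searchers on a set $B$, the family $\mathcal G$ consists precisely of the subgraphs $G[W_i\cup N(W_i)]$ with the vertices of $N(W_i)$ occupied, and these are the very subgraphs in which a single white neighbour suffices to trigger a positive force.

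For $\pfms(G)\le Z_+(G)$: let $B$ be an optimal positive zero forcing set and place one searcher on each vertex of $B$. This is $Z_+(G)$ placings, and the resulting family $\mathcal G$ is obtained by the component-splitting described in the search model, which matches the sets $W_i$ from the positive colour change rule. Whenever a vertex $u\in B$ (now occupied, possibly appearing in several subgraphs of $\mathcal G$) has a unique white neighbour $w$ inside some subgraph $H\in\mathcal G$, all edges at $u$ within $H$ except $\{u,w\}$ are cleared, so we may slide the searcher along $\{u,w\}$; after sliding, we re-split $H$ along the components of its contaminated part, again mirroring the next step of the positive forcing process. Since $B$ is a positive zero forcing set, iterating this clears every subgraph and hence $G$, using no further placings. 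Care must be taken that a single forcing step in which a vertex forces into several components simultaneously corresponds to several sliding actions in the search (one per subgraph), but this is allowed and does not increase the number of placings.

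For $\pfms(G)\ge Z_+(G)$: let $S$ be an optimal parallel fast-mixed search strategy using $\pfms(G)$ placings, and let $B$ be the set of vertices that receive a placed (not slid) searcher; since a searcher is placed only on a contaminated vertex, these are distinct, so $|B|=\pfms(G)$. Colour $B$ black, all else white. Each sliding action in $S$ slides a searcher along an edge $\{x,y\}$ inside some subgraph $H\in\mathcal G$ where all other edges at $x$ in $H$ are cleared and $y$ is contaminated; in the zero forcing interpretation, $x$ is black, the subgraph $H$ is (a subset of) one of the components $W_i\cup N(W_i)$ at the current stage, and $y$ is the unique white neighbour of $x$ within that component, so $x$ positively forces $y$. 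Since $S$ clears $G$, every vertex becomes occupied hence black, so $B$ is a positive zero forcing set and $Z_+(G)\le|B|=\pfms(G)$.

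The main obstacle, and the part I would write most carefully, is the bookkeeping that the branching structure $\mathcal G$ genuinely coincides with the family of induced subgraphs $\{H[X_i\cup N_H(X_i)]\}$ arising from the positive colour change rule at every stage — in particular that a vertex occupied/black in the global picture is occupied/black in exactly the subgraphs where it has a neighbour, and that ``unique white neighbour inside $H$'' in the search model is the same condition as ``unique white neighbour in the graph induced by $W_i\cup B$'' in the positive forcing rule. Once this correspondence is set up cleanly, the clearing/forcing steps transfer verbatim and the equality follows; I would simply point to the structure of the proof of Theorem~\ref{thm:fms=Z} for the routine parts and emphasize only where the component-splitting enters.
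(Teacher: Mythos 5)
Your proof is correct and takes exactly the route the paper intends: the paper omits the proof of this theorem, saying only that it is ``similar to Theorem~\ref{thm:fms=Z}'', and your argument is precisely that adaptation, with the component-splitting that defines $\mathcal{G}$ matched to the components $W_1,\dots,W_k$ appearing in the positive colour change rule, and with the observation that one vertex forcing into several components corresponds to several slides. No gaps.
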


\section{A Linear Time Algorithm for Positive Zero Forcing Number of Chordal Graphs}

In this section, we give an algorithm for finding optimal positive
zero forcing sets and optimal tree covers of chordal graphs. Our
algorithm is a modification of the algorithm for computing clique
covers as presented in \cite{ST99}.

\medskip
\noindent
{\bf Algorithm} {\sc Zplus-Chordal} 

\noindent{\em Input}: A connected chordal graph $G$ on $n$ vertices in which all the edges and vertices uncoloured. \\

\noindent{\em Output}: An optimal positive zero forcing tree cover of $G$ and an optimal positive zero forcing set of $G$.

\begin{enumerate}
\item If $n =1$, then mark the single vertex in $G$ black. Output the black vertex set and stop.

\item \label{alg1:s0}
Let $(v_1, v_2, \dots, v_n)$ be the perfect elimination ordering of $G$ given by the lexicographic breadth-first search.
Let $i \leftarrow 1$ and $G_i \leftarrow G$.

\item \label{alg1:s1}
For the simplicial vertex $v_i$ in $G_i$, let $C_i$ be the clique whose vertex
set consists of $v_i$ and all its neighbours in $G_i$.

\item \label{alg1:s2}
If there is an edge $e$ of $G_i$ incident to $v_i$ that is uncoloured, then
\begin{enumerate}
\item colour the edge $e$ black, and colour all other uncoloured edges of the clique $C_i$ red;
\item colour $v_i$ white;
\item go to Step \ref{alg1:s4}.
\end{enumerate}

\item \label{alg1:s3}
If all edges of $G_i$ incident to $v_i$ are coloured, then colour $v_i$ black.

\item \label{alg1:s4}
Set $G_{i+1} \leftarrow G_i - v_i$. 
If $i < |V(G)|-1$, then set $i \leftarrow i+1$ and go to Step \ref{alg1:s1}; 
otherwise, colour the only vertex $v_{i+1}$ in $G_{i+1}$ black and go to Step \ref{alg1:s5}.

\item \label{alg1:s5} Remove all red edges from $G$.  Let
  $\mathcal{T}$ be the set of all connected components of the
  remaining graph after all red edges are removed. Output
  $\mathcal{T}$ and its black vertex set and stop.
\end{enumerate}

Let $V_{\mathrm{black}}(G)$ be the set of all black vertices, then
$V_{\mathrm{white}}(G) = V(G) \backslash V_{\mathrm{black}}(G)$ is the
set of all white vertices from Algorithm {\sc Zplus-Chordal}. The next
result says that the set of black vertices generated with this
algorithm is a positive zero forcing set for the graph. Algorithm {\sc
  Zplus-Chordal} is designed only for connected graphs, but it can be
run on each connected component of a disconnected graph. So, without
loss of generality, we will assume that our graphs are connected.

\begin{lemma}\label{lemma:Z+}
Let $G$ be a connected chordal graph. Then $V_{\mathrm{black}}(G)$ is a positive zero forcing set of $G$.
\end{lemma}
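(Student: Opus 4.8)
The plan is to show that the black vertices produced by Algorithm {\sc Zplus-Chordal} can, in fact, force every white vertex, by processing the vertices in the \emph{reverse} of the perfect elimination ordering $(v_1,\dots,v_n)$ and exhibiting, for each white vertex, a black vertex that forces it at the right moment. First I would fix notation: let $v_i$ be a white vertex. By Step~\ref{alg1:s2}, $v_i$ was coloured white precisely because at stage $i$ there was an uncoloured edge $e$ incident to $v_i$ in $G_i$; the algorithm coloured that one edge black, coloured the remaining uncoloured edges of the clique $C_i$ red, and then deleted $v_i$. The key structural observation is that the black edge $e = \{v_i, w_i\}$ joins $v_i$ to some neighbour $w_i$ in the clique $C_i$, and this edge survives into the final tree cover $\mathcal{T}$ (only red edges are removed in Step~\ref{alg1:s5}). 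So $v_i$ is \emph{not} isolated in $\mathcal{T}$; it has at least the tree-edge to $w_i$.

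Next I would argue by induction along the reverse PEO that the white vertices can be forced in the order $v_n, v_{n-1}, \dots, v_1$ (skipping those already black). Suppose $v_i$ is white and every white vertex $v_j$ with $j > i$ has already been forced black. Consider the moment in the positive forcing process when all of $v_{i+1}, \dots, v_n$ are black (together with the original black set). I claim the unique black neighbour $w_i$ of $v_i$ along the black edge $e$ can force $v_i$. For this I need: in the graph induced by $B \cup W$, where $W$ is the connected component of the white-at-this-moment subgraph containing $v_i$, the vertex $w_i$ has $v_i$ as its only white neighbour. Here is where chordality and the clique structure do the work. Every neighbour of $v_i$ in $G$ is either (a) a later vertex $v_j$, $j>i$ — hence black by the induction hypothesis — or (b) an earlier vertex, i.e. a neighbour of $v_i$ inside the clique $C_i$ (since $v_i$ is simplicial in $G_i$, all its neighbours in $G_i$ are mutually adjacent, and any neighbour with smaller index was already eliminated). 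For the earlier neighbours: by Step~\ref{alg1:s2}(a), every edge of $C_i$ other than $e$ was coloured either black or red at stage $i$; the red ones are gone in $\mathcal{T}$, so the component $W$ containing $v_i$ meets $C_i$ only in $v_i$ and $w_i$. Thus $w_i$'s neighbours inside $W$ are just $v_i$ (any other common clique-neighbour is separated from $v_i$ in $W$ by the removal of red edges), giving the forcing step.

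The main obstacle — and the part that needs care rather than hand-waving — is pinning down exactly why the component $W$ of the still-white subgraph, restricted to the clique $C_i$, consists only of $v_i$. This requires checking that every other vertex of $C_i \setminus \{v_i, w_i\}$ is black by the time we force $v_i$: such a vertex $u$ lies in $C_i$, so the edge $\{v_i,u\}$ was red-coloured and removed, and moreover $u$ itself has an index, say $m$; if $m > i$ it is black by induction, and if $m < i$ it was already eliminated before stage $i$, contradicting $u \in V(G_i)$. So in fact every vertex of $C_i$ other than $v_i$ either is $w_i$ or has index exceeding $i$, hence is black. A secondary subtlety is the base of the induction: the last vertex $v_n$ in the PEO is coloured black in Step~\ref{alg1:s4} (it is the unique vertex of $G_n$), so there is no issue there, and more generally one should note that whenever Step~\ref{alg1:s3} fires, all edges at $v_i$ are already coloured, so $v_i$ being black is consistent. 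Assembling these observations: the reverse-PEO order gives a valid sequence of positive forces that blackens every white vertex, so $V_{\mathrm{black}}(G)$ is a positive zero forcing set of $G$.
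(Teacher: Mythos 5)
Your setup is the same as the paper's: list the white vertices in the order the algorithm removes them, and force them in reverse, showing at each step that the black endpoint $w_i$ of the black edge at $v_i$ performs the force. Your observation that every vertex of $C_i\setminus\{v_i\}$ has index exceeding $i$ (hence is already black at that moment) is correct and is a useful piece of the picture. But the proof has a genuine gap at the one step that actually requires work: to apply the positive colour change rule you must show that $v_i$ is the \emph{only} white neighbour of $w_i$ in the component $W$ of the currently-white subgraph containing $v_i$, and your argument never rules out that $w_i$ has some \emph{other} white neighbour $u$ lying in $W$ but \emph{outside} the clique $C_i$ --- for instance a white vertex $u=v_j$ with $j<i$ that is adjacent to $w_i$ but not to $v_i$, joined to $v_i$ by a path of white vertices. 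Your parenthetical justification (``any other common clique-neighbour is separated from $v_i$ in $W$ by the removal of red edges'') both restricts attention to neighbours inside $C_i$ and conflates two different objects: the components $W$ in the positive forcing rule are components of $G$ minus the current black set, computed in the \emph{full} graph $G$; the red edges deleted in Step~\ref{alg1:s5} are irrelevant to how $W$ is formed. You also drift at one point into analysing the neighbours of $v_i$ rather than the neighbours of the forcing vertex $w_i$, which is the vertex whose white neighbourhood must be controlled.

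This missing step is precisely where the paper invokes chordality in a nontrivial way: assuming $w_i$ (there called $b_m$) has a second neighbour $u_1$ in the white component $H_m$ containing $w_m$, connectivity of $H_m$ yields a cycle $b_m,u_1,\dots,u_k,w_m,b_m$, and a \emph{minimal} such cycle leads to a contradiction --- if it is a triangle, then when $u_1$ was coloured white the edge $\{w_m,b_m\}$ would have been coloured red rather than black; if it is longer, the first vertex of the cycle to be coloured white is simplicial at that stage, so its two cycle-neighbours are adjacent, contradicting minimality. Some argument of this kind (or an equivalent use of the perfect elimination ordering) is needed to close your proof; the clique-local reasoning you give does not suffice.
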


\begin{proof}
  The set $V_{\mathrm{black}}(G)$ is the set of all vertices that are
  initially black in $G$.  Let $V_{\mathrm{white}}(G)=\{w_1, w_2,
  \dots, w_m\}$ where $w_i$ is removed before $w_{i+1}$ in Algorithm
  {\sc Zplus-Chordal} for $1 \leq i < m$. At the iteration when $w_j$
  is coloured white, let $e_j = \{w_j, b_j\}$ be the edge that is
  coloured black. In particular, for the last white vertex $w_m$, the
  black edge is $e_m = \{w_m, b_m\}$ and $b_m$ is in the set
  $V_{\mathrm{black}}(G)$. We claim that in a positive zero forcing
  process in $G$ starting with $V_{\mathrm{black}}(G)$, the vertex
  $b_m$ can force $w_m$.

  Let $H_m$ be the connected component in the subgraph of $G$ induced
  by the vertices $V(G) \backslash V_{\mathrm{black}}(G)$ that
  contains the last white vertex, $w_m$. We will show that the only
  vertex in $H_m$ that is adjacent to $b_m$ is $w_m$. The vertex $b_m$
  is adjacent to $w_m$ and assume that it is also adjacent to another
  vertex, say $u_1$ in $H_m$ (this vertex must be part of
  $V_{\mathrm{white}}(G)$).  Let $\{ b_m, u_1,u_2, \dots, u_k , w_m ,b_m
  \}$ be a cycle of minimal length with $u_1,u_2, \dots, u_k \in
  H_m$. Such a cycle exists since $H_m$ is connected.

  If this cycle has length three, then $u_1$ and $w_m$ are adjacent.
  But at the iteration when $u_1$ is marked white, the edge $\{w_m,
  b_m\}$ will be coloured red.  This is a contradiction, as the
  edge $\{w_m, b\}$ is black.

  Assume this cycle has length more than three and let $u_{\ell}$ be
  the vertex in the cycle that was coloured white first. At the
  iteration where $u_\ell$ is coloured white, it is a simplicial
  vertex. This implies that the neighbours of $u_\ell$ in the cycle
  are adjacent. But this is a contradiction with the choice of $\{
  b_m, u_1,u_2, \dots, u_k , w_m ,b \}$ being a cycle of minimal
  length with $u_1,u_2, \dots, u_k \in H_m$.

  By the positive zero forcing rule, we know that $b_m$ can force
  $w_m$ to be black.  We now change the colour of vertex $w_m$ to
  black, add it to $V_{\mathrm{black}}(G)$ and delete it from
  $V_{\mathrm{white}}(G)$.

  Similarly, at the iteration when $w_{m-1}$ is coloured white, let
  $e_{m-1} = \{w_{m-1},b_{m-1}\}$ be the edge that is coloured
  black. Using the above argument, we can show that $b_{m-1}$ can
  force $w_{m-1}$. Continuing this process, all the white
  vertices are forced to be black. Therefore, $V_{\mathrm{black}}(G)$
  is a positive zero forcing set for $G$.
\end{proof}

For a graph $G$, the set $V_{\mathrm{white}}(G)=\{w_1,w_2, \dots, w_m\}$
is the set of all white vertices produced from applying Algorithm {\sc
  Zplus-Chordal} to $G$ (or on each of its connected components).  Let
$C_{i}$ be the clique whose vertex set consists of $w_i$ and all its
neighbours at the point when $w_i$ is coloured white. Define $C(G)=\{C_1, C_2, \dots, C_m\}$.
 
Every edge of $G$ is coloured in Algorithm {\sc Zplus-Chordal} and at
the iteration when it is coloured, it must belong to some clique
$C_i$. Thus we have the following lemma.

\begin{lemma}\label{lemma:cc}
Let $G$ be a chordal graph. Then $\mathcal{C}(G)$ is a clique cover of $G$.
\end{lemma}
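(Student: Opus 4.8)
The plan is to show that $\mathcal{C}(G) = \{C_1, C_2, \dots, C_m\}$ is a clique cover of $G$, i.e., each $C_i$ is a clique in $G$ and every edge of $G$ lies in at least one $C_i$. The fact that each $C_i$ is a clique is immediate from its definition: $C_i$ consists of the vertex $w_i$ together with all of its neighbours in $G_i$ at the moment $w_i$ is coloured white, and since $w_i$ is simplicial in $G_i$ at that iteration, this vertex set induces a complete subgraph. So the substance is the covering claim.

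First I would observe that the index set of white vertices corresponds exactly to the iterations of Algorithm {\sc Zplus-Chordal} at which the condition in Step~\ref{alg1:s2} holds, namely the iterations where $v_i$ still has an uncoloured incident edge; in those iterations $v_i$ is renamed $w_j$ for the appropriate $j$, the clique $C_i$ of that iteration is recorded as $C_j \in \mathcal{C}(G)$, and \emph{all} uncoloured edges incident to $v_i$ in $G_i$ (equivalently, all uncoloured edges of $C_i$) get coloured — one black, the rest red. The key structural point is that \emph{every} edge of $G$ gets coloured at some iteration: when we process $v_i$, all edges of $G_i$ incident to $v_i$ are coloured (either they were already coloured, or Step~\ref{alg1:s2} colours them all now), and then $v_i$ is deleted to form $G_{i+1}$; since the perfect elimination ordering exhausts all vertices, every edge $\{v_i, v_\ell\}$ with $i < \ell$ is incident to $v_i$ in $G_i$ and hence is coloured no later than iteration $i$.

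Next I would pin down that whenever an edge $e = \{v_i, v_\ell\}$ (with $i < \ell$) is first coloured, that colouring event occurs at an iteration of the type handled in Step~\ref{alg1:s2} — that is, at an iteration producing some white vertex $w_j$ — because edges are coloured only inside the block 4(a), and that block colours precisely the uncoloured edges of the clique $C_i$ associated with the white vertex created at that iteration. Therefore the first time $e$ is coloured, it is coloured as part of some $C_j$, and hence $e \in C_j \in \mathcal{C}(G)$. Combining this with the previous paragraph — every edge is eventually coloured, and the colouring of an edge always happens within some $C_j$ — yields that every edge of $G$ belongs to some member of $\mathcal{C}(G)$, which together with each $C_i$ being a clique completes the proof.

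I expect the main obstacle to be the careful bookkeeping that identifies the ``recorded'' cliques $C_i$ from the definition of $\mathcal{C}(G)$ with the cliques $C_i$ constructed in Step~\ref{alg1:s1} of the algorithm, and in particular verifying the claim (implicit in the excerpt's remark preceding the lemma) that an edge is coloured \emph{only} inside Step~4(a) and hence only ever as part of one of these recorded cliques. Once that is granted, the argument is essentially the same as in the classical Gavril-type analysis for chordal clique covers via perfect elimination orderings, and no delicate computation is needed.
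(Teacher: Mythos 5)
Your proposal is correct and follows essentially the same reasoning as the paper, which justifies this lemma in a single sentence (every edge is coloured at some iteration, and edges are coloured only as part of some recorded clique $C_i$); you have simply filled in the bookkeeping details. The only minor imprecision is the parenthetical claiming the uncoloured edges incident to $v_i$ are ``equivalently'' the uncoloured edges of $C_i$ --- the latter set also contains edges between neighbours of $v_i$ --- but this does not affect the argument.
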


From Lemmas~\ref{lemma:Z+} and \ref{lemma:cc}, we can prove the
correctness of Algorithm {\sc Zplus-Chordal} as follows.

\begin{theorem} \label{thm:correctness}
Let $G$ be a chordal graph. Then $V_{\mathrm{black}}(G)$ is an optimal positive zero forcing set of $G$.
\end{theorem}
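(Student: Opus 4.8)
The plan is to show that the black set produced by the algorithm has size exactly $|V(G)| - cc(G)$, which by the chain of inequalities \eqref{eq:ccbound} together with the established identity $Z_+(G) = |V(G)| - cc(G)$ for chordal graphs forces optimality. Since Lemma~\ref{lemma:Z+} already guarantees that $V_{\mathrm{black}}(G)$ is \emph{some} positive zero forcing set, we have $Z_+(G) \le |V_{\mathrm{black}}(G)|$; it remains to prove the reverse inequality $|V_{\mathrm{black}}(G)| \le Z_+(G)$, and the cleanest route is through the clique cover $\mathcal{C}(G)$ of Lemma~\ref{lemma:cc}.

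First I would count: every vertex of $G$ is, at the iteration it is eliminated, coloured either white (if some incident edge was still uncoloured, i.e.\ if $v_i$ contributes a genuinely ``new'' clique $C_i$ to the cover) or black (if all its incident edges were already coloured), and the final isolated vertex is always black. Hence $|V_{\mathrm{white}}(G)| = |\mathcal{C}(G)|$ and therefore
\begin{equation*}
|V_{\mathrm{black}}(G)| = |V(G)| - |V_{\mathrm{white}}(G)| = |V(G)| - |\mathcal{C}(G)|.
\end{equation*}
So the heart of the matter is to show that $\mathcal{C}(G)$ is an \emph{optimal} clique cover, i.e.\ $|\mathcal{C}(G)| = cc(G)$; equivalently $|\mathcal{C}(G)| \le cc(G)$ since it is already a cover by Lemma~\ref{lemma:cc}. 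Then
\begin{equation*}
|V_{\mathrm{black}}(G)| = |V(G)| - |\mathcal{C}(G)| \ge |V(G)| - cc(G) = Z_+(G) \ge |V_{\mathrm{black}}(G)|,
\end{equation*}
forcing equality throughout.

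To prove $|\mathcal{C}(G)| \le cc(G)$ I would argue that the cliques $C_i$ arising from the lexicographic-BFS perfect elimination ordering are precisely the cliques used by the classical minimum clique cover algorithm for chordal graphs (this is exactly the algorithm of Scheinerman--Trenk~\cite{ST99} that the paper says it is modifying): each white vertex $w_i$ is simplicial in $G_i$ with at least one uncoloured incident edge, and the standard analysis of that greedy procedure shows the number of cliques it generates equals the clique cover number of a chordal graph. Concretely, one assigns to each generated clique $C_i$ its ``new'' black edge $e_i$, and shows these black edges are forced to lie in distinct cliques of \emph{any} edge clique cover of $G$ — a simplicial-vertex / perfect-elimination argument of the same flavour as the cycle argument in Lemma~\ref{lemma:Z+} — giving $|\mathcal{C}(G)| \le cc(G)$.

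The main obstacle I expect is exactly this last step: rigorously pinning down why the greedy clique cover built along the lex-BFS PEO is optimal, i.e.\ producing the injection from $\mathcal{C}(G)$ into the cliques of an arbitrary optimal edge clique cover. This is where the chordality (no induced $C_4$ or longer) and the simplicial structure must be used carefully, and it is the place where one most wants to either invoke~\cite{ST99} directly or reproduce its argument. Everything else — the bookkeeping that $|V_{\mathrm{black}}| = |V| - |\mathcal{C}(G)|$, and stringing together \eqref{eq:ccbound} with Lemma~\ref{lemma:Z+}, Lemma~\ref{lemma:cc}, and the chordal identity $Z_+(G) = |V(G)| - cc(G)$ — is routine once the optimality of $\mathcal{C}(G)$ is in hand.
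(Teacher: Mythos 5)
Your overall strategy (count $|V_{\mathrm{black}}(G)| = |V(G)| - |\mathcal{C}(G)|$ and sandwich via the clique cover together with \eqref{eq:ccbound} and Lemma~\ref{lemma:Z+}) is the same as the paper's, but you have the key inequality pointing the wrong way, and this sends you after a result you neither need nor actually prove. Since $\mathcal{C}(G)$ is a clique cover (Lemma~\ref{lemma:cc}), the inequality that comes for free is $cc(G) \le |\mathcal{C}(G)|$, which yields
\[
|V_{\mathrm{black}}(G)| = |V(G)| - |\mathcal{C}(G)| \le |V(G)| - cc(G) \le Z_+(G) \le |V_{\mathrm{black}}(G)|,
\]
using \eqref{eq:ccbound} in the middle and Lemma~\ref{lemma:Z+} at the end. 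This closed chain of ``$\le$'' forces equality throughout, proving the theorem and delivering the optimality of $\mathcal{C}(G)$ (Corollary~\ref{cor:cc}) as a by-product. No analysis of the greedy clique cover's optimality is required as an input.

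Your displayed chain instead reads $|V_{\mathrm{black}}(G)| \ge |V(G)| - cc(G) = Z_+(G) \ge |V_{\mathrm{black}}(G)|$. Its first link requires the hard direction $|\mathcal{C}(G)| \le cc(G)$, which you correctly flag as ``the main obstacle'' but do not actually establish (you only say you would invoke or reproduce~\cite{ST99}); and its last link, $Z_+(G) \ge |V_{\mathrm{black}}(G)|$, is precisely the reverse inequality you set out to prove --- Lemma~\ref{lemma:Z+} gives the \emph{opposite} direction, since $V_{\mathrm{black}}(G)$ being some positive zero forcing set only bounds the minimum from above --- so as written the chain is circular. The optimality of $\mathcal{C}(G)$ is a consequence of the theorem, not a prerequisite for it; once the inequality is flipped to the trivial direction, your self-identified obstacle disappears and the remaining bookkeeping coincides with the paper's proof.
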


\begin{proof}
  Without loss of generality, we suppose that $G$ is a connected
  chordal graph. From Lemma~\ref{lemma:Z+} we know that
  $V_{\mathrm{black}}(G)$ is a positive zero forcing set of
  $G$, so we only need to show that it is the smallest possible.

 From Lemma~\ref{lemma:cc} we know that $C(G)$ is a clique cover of
 $G$. Thus, $cc(G) \leq |C(G)|$, using this with (\ref{eq:ccbound}) we have that
\[
|V(G)| - |C(G)| \leq |V(G)| - cc(G) \leq Z_+(G) \leq | V_{\mathrm{black}}(G)|.
\]

The pair $V_{\mathrm{black}}(G)$ and $V_{\mathrm{white}}(G)$ form a partition of $V(G)$ with
$|V_{\mathrm{white}}(G)|= |C(G)|$, so
\[
|V(G)| - |C(G)| = |V_{\mathrm{black}}(G)|.
\] 
Therefore, $|V(G)| - cc(G) = Z_+(G) = |V_{\mathrm{black}}(G)|$, and
$V_{\mathrm{black}}(G)$ is an optimal positive zero forcing set for
$G$.
\end{proof}

As a byproduct from the proof of Theorem~\ref{thm:correctness}, we
have the following result for chordal graphs.

\begin{corollary} \label{cor:cc}
Let $G$ be a chordal graph. Then
\begin{enumerate}
\item  $\mathcal{C}(G)$ is an optimal clique cover for $G$, \label{cor:ccpart1}
\item $|V(G)| - cc(G) = Z_+(G)$. \label{cor:ccpart2}
\end{enumerate}
\end{corollary}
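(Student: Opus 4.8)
The final statement to prove is Corollary~\ref{cor:cc}, which asserts two facts about any chordal graph $G$: first, that the clique cover $\mathcal{C}(G)$ produced by Algorithm {\sc Zplus-Chordal} is optimal, and second, that $|V(G)| - cc(G) = Z_+(G)$. The plan is to extract both conclusions directly from the chain of inequalities already assembled in the proof of Theorem~\ref{thm:correctness}, observing that all of them must collapse to equalities.

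\medskip
\noindent\textbf{Proof.}
As in the proof of Theorem~\ref{thm:correctness}, assume without loss of generality that $G$ is connected (otherwise apply the argument to each component and sum). From Lemma~\ref{lemma:cc} we know $\mathcal{C}(G)$ is a clique cover of $G$, so $cc(G) \leq |\mathcal{C}(G)|$. Combining this with the bound (\ref{eq:ccbound}) and Lemma~\ref{lemma:Z+} gives
\[
|V(G)| - |\mathcal{C}(G)| \leq |V(G)| - cc(G) \leq \Mp(G) \leq Z_+(G) \leq |V_{\mathrm{black}}(G)|.
\]
Since $V_{\mathrm{black}}(G)$ and $V_{\mathrm{white}}(G)$ partition $V(G)$ and $|V_{\mathrm{white}}(G)| = |\mathcal{C}(G)|$ (each white vertex $w_i$ contributes exactly one clique $C_i$), the leftmost and rightmost quantities coincide: $|V(G)| - |\mathcal{C}(G)| = |V_{\mathrm{black}}(G)|$. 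Hence every inequality in the display is an equality. In particular $cc(G) = |\mathcal{C}(G)|$, which proves part~\ref{cor:ccpart1}, and $|V(G)| - cc(G) = Z_+(G)$, which proves part~\ref{cor:ccpart2}. \hfill$\square$

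\medskip
There is really no obstacle here: the corollary is a pure ``byproduct'' observation, and the only point requiring a moment of care is the bookkeeping that $|V_{\mathrm{white}}(G)| = |\mathcal{C}(G)|$ — i.e., that the map $w_i \mapsto C_i$ is a bijection between the white vertices and the cliques in $\mathcal{C}(G)$. This is immediate from the construction: Algorithm {\sc Zplus-Chordal} creates one clique $C_i$ precisely at each iteration in which some vertex $w_i$ is coloured white, and distinct white vertices yield distinct (indeed distinctly-indexed) cliques. With that in hand, the squeeze argument forces all the displayed inequalities — including $cc(G)\le|\mathcal{C}(G)|$ — to be tight, and both claims of the corollary fall out simultaneously.
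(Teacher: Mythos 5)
Your proof is correct and follows essentially the same route as the paper: the corollary is obtained exactly as the paper intends, by observing that the chain of inequalities in the proof of Theorem~\ref{thm:correctness} is squeezed between the equal quantities $|V(G)| - |\mathcal{C}(G)|$ and $|V_{\mathrm{black}}(G)|$, forcing every step, including $cc(G) \leq |\mathcal{C}(G)|$, to be an equality. Your added remark that $w_i \mapsto C_i$ is a bijection is a reasonable piece of bookkeeping that the paper leaves implicit.
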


Note that Corollary~\ref{cor:cc} (\ref{cor:ccpart1}) is proved in
\cite{ST99} by using primal and dual linear
programming. Corollary~\ref{cor:cc} (\ref{cor:ccpart2}) can be deduced
from the work in \cite{psd1}, where the concept of orthogonal removal
is used along with an inductive proof technique.

We can easily modify Algorithm {\sc Zplus-Chordal} so that it can also
output the coloured graph $G$. In this graph, the number of black
edges is equal to the number of white vertices. From the proof of
Lemma~\ref{lemma:Z+}, we know that every black edge can be used to
force a white vertex to black.  Define $T_{\mathrm{black}}(G)$ to be
the subgraph of $G$ formed by taking all the edges (and their
endpoints) that are coloured black by Algorithm {\sc
  Zplus-Chordal}. The next result gives some of the interesting properties of
$T_{\mathrm{black}}(G)$

\begin{theorem}\label{thm:forest}
  Let $G$ be a chordal graph and let $T_{\mathrm{black}}(G)$ be the
  subgraph formed by all the edges  that are
  coloured black in Algorithm {\sc Zplus-Chordal}. Then
\begin{enumerate}
\item the graph $T_{\mathrm{black}}(G)$ is a forest; \label{forest}
\item all of the white vertices of $G$ are contained among the
  vertices of $T_{\mathrm{black}}(G)$; \label{allthewhite}
\item each connected component of $T_{\mathrm{black}}(G)$ contains exactly one black vertex; \label{exactlyonblack}
\item $T_{\mathrm{black}}(G)$ is an induced subgraph of $G$.
\end{enumerate}
\end{theorem}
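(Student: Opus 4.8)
The plan is to analyze the structure of $T_{\mathrm{black}}(G)$ directly from the way edges get coloured black in Algorithm {\sc Zplus-Chordal}, exploiting the bijection between black edges and white vertices. Recall that each time a vertex $v_i$ is coloured white, exactly one edge $e_j = \{w_j, b_j\}$ incident to $v_i = w_j$ is coloured black (and all other uncoloured edges of $C_i$ become red), and no edge incident to $v_i$ is coloured black at any later iteration, since $v_i$ is deleted. Conversely, an edge is coloured black only at such an iteration. So the black edges are exactly $e_1, \dots, e_m$, one per white vertex, and $w_j$ is an endpoint of $e_j$ but of none of $e_1, \dots, e_{j-1}$ (those would have been coloured before $w_j$ was even looked at, but in fact each $e_\ell$ for $\ell < j$ has $w_\ell$ as its "new" endpoint which is already deleted). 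This gives $|E(T_{\mathrm{black}}(G))| = m = |V_{\mathrm{white}}(G)|$.

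For part (\ref{allthewhite}): every white vertex $w_j$ is an endpoint of its black edge $e_j$, hence lies in $T_{\mathrm{black}}(G)$; this is immediate. For part (\ref{forest}): I would show $T_{\mathrm{black}}(G)$ has no cycle. Suppose it did; take the vertex on the cycle that is coloured white earliest in the algorithm, say $w_j = v_i$ (at least one cycle vertex must be white, in fact all its cycle-neighbours would need to be present when it is processed). At the iteration when $v_i$ is processed, only one incident edge $e_j$ is coloured black and the rest of $C_i$ is coloured red; but a cycle through $v_i$ needs two black edges at $v_i$ that survive to the end, and the two cycle-neighbours of $v_i$ are adjacent (simpliciality), so the edge between them is in $C_i$ — I would push this to contradict minimality exactly as in the proof of Lemma~\ref{lemma:Z+}, or more directly observe $v_i$ has black-degree $1$ at the moment it is processed and never gains another black edge. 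Actually the cleanest route: the map $w_j \mapsto e_j$ together with "$w_j$ is incident to $e_j$ but $e_j$ is the unique black edge incident to $w_j$ created at that step, and every black edge has its later-deleted endpoint among the white vertices" lets me set up an orientation of $T_{\mathrm{black}}(G)$ (orient $e_j$ toward $w_j$) in which every white vertex has in-degree exactly $1$; since $|E| = m = \#\{\text{white vertices}\}$ and every edge points to a white vertex with in-degree $1$, a counting/acyclicity argument shows each component is a tree with its one black vertex as the root.

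For part (\ref{exactlyonblack}): each component $T$ of $T_{\mathrm{black}}(G)$ has $|V(T)| - 1$ edges (being a tree by part (\ref{forest})), and each edge contributes one distinct white endpoint $w_j$, all lying in $T$; since a tree on $|V(T)|$ vertices with $|V(T)|-1$ edges and $|V(T)|-1$ white vertices forces exactly one non-white, i.e. black, vertex per component. (One must also note every component contains at least one black vertex: if a component were all white, orient as above and get a white vertex of in-degree $0$, contradiction — or invoke that $T_{\mathrm{black}}(G)$ has as many edges as white vertices globally and sum the per-component counts.) For part (4): I would show $T_{\mathrm{black}}(G)$ is induced, i.e. if $u, v$ are both in $T_{\mathrm{black}}(G)$ and $\{u,v\} \in E(G)$ then $\{u,v\}$ is black. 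Suppose not; then $\{u,v\}$ is red, so it was coloured red at the iteration of some simplicial $v_i \in \{u,v\}$, say $v_i = u$, because $\{u,v\} \in C_i$; at that iteration the black edge chosen at $u$ was some $e = \{u, w\} \ne \{u,v\}$, and $u$ gets no further edges. So the only black edge at $u$ is $e$, meaning $v \notin T_{\mathrm{black}}(G)$ unless $v$ is an endpoint of some other black edge $e'$ — fine, $v$ can still be in $T_{\mathrm{black}}(G)$; so I need the sharper fact that at the iteration processing $u$, if $\{u,v\}$ is still uncoloured then it becomes red, and if it was already coloured it must have been black (it can't have been coloured red earlier since a red colouring of $\{u,v\}$ at an earlier step would require the earlier-processed endpoint, and then that endpoint is gone). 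I expect the main obstacle to be exactly this case analysis for part (4): carefully tracking, for an edge with both endpoints in $T_{\mathrm{black}}(G)$, that it cannot have been coloured red — this hinges on the fact that an edge is coloured red only at the iteration of one of its endpoints, and that endpoint is then deleted, so if the other endpoint later acquires a black edge, consistency of the clique structure (chordality / simpliciality) must be used to rule out the red colouring, much as in Lemma~\ref{lemma:Z+}. Parts (\ref{forest}), (\ref{allthewhite}), (\ref{exactlyonblack}) I expect to fall out of the edge-count/orientation bookkeeping with little friction.
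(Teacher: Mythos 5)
Your handling of parts (\ref{forest})--(\ref{exactlyonblack}) is essentially sound. Part (\ref{allthewhite}) is immediate; your first route to part (\ref{forest}) (the earliest-coloured vertex on a putative cycle can acquire at most one black edge to a later-coloured neighbour) is the paper's argument; and your counting proof of part (\ref{exactlyonblack}) --- the $|V(T)|-1$ edges of a component are in bijection with white vertices lying in that component, so exactly one vertex per component is black --- is a genuinely different and arguably cleaner route than the paper's, which instead shows that the vertex of $T$ occurring last in the perfect elimination ordering is black and is the only black vertex. One bookkeeping slip: $w_j$ \emph{can} be an endpoint of an earlier black edge $e_\ell$, namely as $b_\ell$ (on the path $a$--$b$--$c$ one gets $b=b_1=w_2$), so a white vertex may be incident to several black edges; what your bijection and orientation actually need is only that exactly one black edge is \emph{created} at each white vertex's iteration, which is true.

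The genuine gap is part (4). You do not complete it --- you say you ``need the sharper fact'' and ``expect the main obstacle to be exactly this case analysis'' --- and the claim you lean on to begin it is false: an edge is \emph{not} coloured red only at an iteration processing one of its endpoints. Step~\ref{alg1:s2} reddens \emph{all} uncoloured edges of the clique $C_i$, including edges joining two neighbours of $v_i$; already in a triangle processed at its first vertex $a$, the opposite edge $\{b,c\}$ is reddened at $a$'s iteration. So your plan of localizing the red colouring of $\{u,v\}$ to the iteration of $u$ or of $v$ collapses at the first step. The missing idea is the paper's extremal descent: among all pairs lying in the same component $T$ that are adjacent in $G$ but not in $T$, choose one, $\{v_{i_p},v_{i_q}\}$ with $i_p<i_q$, at minimum $T$-distance. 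At iteration $i_p$ the black edge chosen is $\{v_{i_p},v_{i_j}\}$, and since $v_{i_j}$ and $v_{i_q}$ are both neighbours of the simplicial vertex $v_{i_p}$ in $G_{i_p}$, the edge $\{v_{i_j},v_{i_q}\}$ lies in $C_{i_p}$ and is reddened; because the $T$-path from $v_{i_p}$ to the later-deleted vertex $v_{i_q}$ must leave $v_{i_p}$ through $v_{i_j}$, this produces a pair of the same kind at strictly smaller $T$-distance, a contradiction. Without this descent (or an equivalent use of simpliciality, as in Lemma~\ref{lemma:Z+}), part (4) remains unproved.
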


\begin{proof}

  First, we will simply denote $T_{\mathrm{black}}(G)$ by
  $T_{\mathrm{black}}$.  Note that in each iteration of the Algorithm
  {\sc Zplus-Chordal} when an edge is coloured black in
  Step~\ref{alg1:s2}, it is removed in Step~\ref{alg1:s4} in the same
  iteration. Thus at any iteration $i$, the graph $G_i$ does not
  contain any black edges in Step~\ref{alg1:s1} of the algorithm.

  Suppose there is a cycle in $T_{\mathrm{black}}$.  Let $v_i$ be the
  simplicial vertex in $G_i$ found in Step~\ref{alg1:s1} of Algorithm
  {\sc Zplus-Chordal}, which is the first vertex to be coloured among
  all the vertices on the cycle in $T_{\mathrm{black}}$. Assume that
  $v_i$ is coloured at iteration $i$. Let $u_i$ and $u'_i$ be the two
  neighbours of $v_i$ on the cycle. If $v_i$ is coloured black, then
  both the edges $\{ v_i, u_i\}$ and $\{ v_i, u'_i\} $ must be
  coloured, since none of the edges of $G_i$ can be black, they must
  all be red. But then these edges are not in $T_{\mathrm{black}}$.  If
  $v_i$ is coloured white, then at most one of the edges $\{v_i,
  u_i\}$ and $\{v_i, u'_i\}$ is black, and again, the edges of the
  cycle are not in $T_{\mathrm{black}}$.  Thus $T_{\mathrm{black}}$
  contains no cycles, and hence $T_{\mathrm{black}}$ must be a forest.

  Whenever a vertex of $G$ is coloured white in Step~\ref{alg1:s2} of
  Algorithm {\sc Zplus-Chordal}, an edge incident to it is marked
  black in the same step. Thus $T_{\mathrm{black}}$ contains all the
  vertices that are coloured white by Algorithm {\sc Zplus-Chordal}.

  The vertices of $G$, ordered by the perfect elimination ordering,
  are $\{v_1,v_2, \dots, v_{|V(G)|}\}$. For a connected
  component $T$ in $T_{\mathrm{black}}$, let the vertices in $T$ are
  $V(T)= \{v_{i_1}, v_{i_2}, \dots, v_{i_t}\}$, where $i_1 < i_2 <
  \dots < i_t$, in the perfect elimination ordering of the vertices of
  $G$. We will show that $v_{i_t}$ is the only black vertex in $V(T)$.

  Since $v_{|V(G)|}$ is black, if $i_t = |V(G)|$ then we are done;
  so we will assume that $i_t < |V(G)|$.

  If the vertex $v_{i_t}$ is white, then at Step~\ref{alg1:s2} in the
  algorithm it is coloured white and there is an edge $e=\{v_{i_t},
  v_j\}$ that is coloured black. The edge $e$ must be in $T$ (as it is
  black) and further, $v_j \in V(G_{i_t}) \backslash \{v_{i_t}\}$. Since
  the vertices are being removed in order, this implies that $j>i_t$;
  this is a contradiction since $v_{i_t}$ is the last vertex in the
  ordering that is in $T$. Thus $v_{i_t}$ is coloured black; next we
  will show that $v_{i_{t}}$ is the only vertex in $T$ that is coloured black.

  Assume that $v_{i_j}$ is the vertex with the smallest subscript among
  $\{i_1, i_2, \dots, i_t\}$ that is coloured black in the algorithm.
  Unless $v_{i_j} = v_{i_t}$, the vertex $v_{i_j}$ will be adjacent,
  in $G$, to a vertex in $T$ with a larger subscript.  At
  Step~\ref{alg1:s4} vertex $v_{i_j}$ is removed and all edges
  incident with it are removed as well. So $v_{i_j}$ is not adjacent,
  in $T$, to any vertex with a larger subscript; similarly, no vertex
  with index less than $i_j$ is adjacent in $T$ to a vertex with index
  larger than $i_j$. This implies that $T$ is not connected, which is
  a contradiction.  Therefore, $v_{i_t}$ is the only black vertex in
  $T$.

  Finally, we will show that any connected component $T$ in
  $T_{\mathrm{black}}$ is an induced subgraph of $G$.  If this is not
  the case, then there are two vertices $v_{i_p}$ and $v_{i_q}$ (we
  will assume that $i_p < i_q$), in $V(T)$ such that
  $\{v_{i_p},v_{i_q}\}$ is an edge in $G$ but not in $T$. Pick the
  vertices $v_{i_p}$ and $v_{i_q}$ so that their distance in $T$ is
  minimum over all pairs of vertices that are non-adjacent in $T$, but
  adjacent in $G$.

  At the $i_p$-th iteration of the algorithm, $v_{i_p}$ is the
  simplicial vertex in $G_{i_p}$ which is found in Step
  \ref{alg1:s1}. Let $\{v_{i_p}, v_{i_j}\}$ be the edge marked black
  in Step~\ref{alg1:s2}. Since both $v_{i_j}$ and $v_{i_q}$ are
  adjacent to $v_{i_p}$ in $G_{i_p}$, the edge $\{v_{i_j}, v_{i_q}\}$
  is coloured red at this step. Thus the edge $\{v_{i_j},v_{i_q}\}$ is
  not in $T$. Since $T$ is a tree and $i_p < i_q$, we know that the
  path in $T$ that connects $v_{i_j}$ and $v_{i_q}$ is shorter than
  the path in $T$ that connects $v_{i_p}$ and $v_{i_q}$. Thus we reach
  a contradiction. Hence $T$ is an induced subgraph of $G$.
\end{proof}

From Theorem~\ref{thm:forest}, we have the following result.

\begin{corollary} \label{cor:correctness} Let $G$ be a chordal
  graph. Then the output $\mathcal{T}$ of Algorithm {\sc
    Zplus-Chordal} is an optimal positive zero forcing tree cover of $G$.
\end{corollary}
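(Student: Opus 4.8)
The plan is to combine Theorem~\ref{thm:forest} with the chain of inequalities $\T(G) \le Z_+(G)$ (established in the Preliminaries) and with the already-proved value $Z_+(G) = |V(G)| - cc(G)$ for chordal graphs (Corollary~\ref{cor:cc}). First I would observe that the output $\mathcal{T}$ of Algorithm {\sc Zplus-Chordal} is, by definition in Step~\ref{alg1:s5}, exactly the set of connected components of the graph obtained from $G$ by deleting all red edges. I would argue that this graph is precisely $T_{\mathrm{black}}(G)$ together with the isolated black vertices: every edge of $G$ is coloured either black or red in the course of the algorithm, so the edges surviving the removal of red edges are exactly the black edges, and the vertices are all of $V(G)$. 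Hence $\mathcal{T}$ consists of the connected components of $T_{\mathrm{black}}(G)$ plus one singleton component for each black vertex not incident to a black edge.

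Next I would verify that $\mathcal{T}$ is a genuine tree covering of $G$ in the sense defined in the Preliminaries, i.e.\ a family of vertex-disjoint induced trees covering $V(G)$. Vertex-disjointness and the covering property are immediate since the members of $\mathcal{T}$ are the connected components of a spanning subgraph of $G$. That each component is a tree follows from Theorem~\ref{thm:forest}(\ref{forest}): $T_{\mathrm{black}}(G)$ is a forest, so each of its components is a tree, and the singleton black vertices are trivially trees. That each component is an \emph{induced} subgraph of $G$ is exactly Theorem~\ref{thm:forest}(4) (the singleton components are induced trivially). So $\mathcal{T}$ is a tree covering, and therefore $|\mathcal{T}| \ge \T(G)$.

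The crux is the matching upper bound $|\mathcal{T}| \le Z_+(G)$, which I would get by counting components. By Theorem~\ref{thm:forest}(\ref{exactlyonblack}) each connected component of $T_{\mathrm{black}}(G)$ contains exactly one black vertex, and the singleton components each contain one black vertex; conversely every black vertex lies in exactly one component of $\mathcal{T}$. Hence the number of trees in $\mathcal{T}$ equals the number of black vertices, $|V_{\mathrm{black}}(G)|$. By Theorem~\ref{thm:correctness}, $|V_{\mathrm{black}}(G)| = Z_+(G)$. Combining, $\T(G) \le |\mathcal{T}| = |V_{\mathrm{black}}(G)| = Z_+(G)$. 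Since $\mathcal{T}$ arises from the forcing trees of the optimal positive zero forcing set $V_{\mathrm{black}}(G)$ (each black vertex being the root of its tree, as in the discussion of positive zero forcing tree covers), $\mathcal{T}$ is a positive zero forcing tree cover of size $Z_+(G)$, and any positive zero forcing tree cover has at least $Z_+(G)$ trees; therefore $\mathcal{T}$ is optimal.

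The main obstacle I anticipate is the bookkeeping in the component count — namely being careful that $\mathcal{T}$ as defined in Step~\ref{alg1:s5} really does include singleton components for isolated black vertices and that Theorem~\ref{thm:forest}(\ref{allthewhite})–(\ref{exactlyonblack}) together force a clean bijection between components of $\mathcal{T}$ and black vertices. One should also confirm that the trees in $\mathcal{T}$ coincide with the forcing trees determined by the positive zero forcing process of Lemma~\ref{lemma:Z+}, so that "tree cover" can legitimately be upgraded to "positive zero forcing tree cover"; this follows from the proof of Lemma~\ref{lemma:Z+}, where each white vertex $w_j$ is forced along its black edge $e_j$, so the black edges are exactly the forcing edges and the resulting rooted trees are the components of $T_{\mathrm{black}}(G)$.
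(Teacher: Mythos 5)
Your argument is correct and is exactly the route the paper intends: the paper gives no written proof, simply asserting that the corollary follows from Theorem~\ref{thm:forest}, and your write-up supplies the missing details (components of $T_{\mathrm{black}}(G)$ plus singleton black vertices form an induced tree cover, the bijection with black vertices gives $|\mathcal{T}|=|V_{\mathrm{black}}(G)|=Z_+(G)$ via Theorem~\ref{thm:correctness}, and Lemma~\ref{lemma:Z+} identifies the components with the forcing trees). No gaps; this is a faithful elaboration of the paper's one-line justification.
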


A chordal graph is called \textsl{non-trivial} if it has at least two
distinct maximal cliques; thus a trivial chordal graph is just a
complete graph. Further, a simplicial vertex is called
\textsl{leaf-simplicial} if none of its neighbours are simplicial. A
tree with only one vertex and no edges is called a \textsl{trivial}
tree. In a positive zero forcing tree cover for a graph,
any tree that is trivial consists of precisely one black vertex.

\begin{lemma}\label{lemma:tc}
Let $G$ be a non-trivial chordal graph.
\begin{enumerate}
\item For any optimal tree cover $\mathcal{T}(G)$, each simplicial
  vertex of $G$ is either a trivial tree in $\mathcal{T}(G)$ or a leaf
  of a non-trivial tree in $\mathcal{T}(G)$.
\item There is an optimal tree cover $\mathcal{T}(G)$ of $G$ such that
  each leaf-simplicial vertex of $G$ is a leaf of a non-trivial tree in
  $\mathcal{T}(G)$.
\end{enumerate}
\end{lemma}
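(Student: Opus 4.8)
The plan is to prove part (1) first, since part (2) will build on it. For part (1), let $v$ be a simplicial vertex of $G$ and let $K = N_G(v)$, which is a clique by simpliciality. Let $\mathcal{T}(G)$ be an optimal tree cover and let $T$ be the tree in $\mathcal{T}(G)$ containing $v$. If $T$ is trivial, we are done, so assume $T$ is non-trivial; I must show $v$ is a leaf of $T$, i.e. $v$ has exactly one neighbour in $T$. Suppose instead that $v$ has two neighbours $x, y$ in $T$. Then $x, y \in K$, so $\{x,y\} \in E(G)$; but $T$ is an induced tree, so $\{x,y\} \notin E(T)$, and together with $\{v,x\}, \{v,y\} \in E(T)$ this would force a cycle $x, v, y$ inside the induced subgraph $G[V(T)]$ — contradicting that $T$ is an induced tree on $V(T)$. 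Hence $v$ has at most one neighbour in $T$; since $T$ is non-trivial and connected, $v$ has at least one neighbour in $T$, so exactly one, i.e. $v$ is a leaf.

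For part (2), I start from any optimal tree cover $\mathcal{T}(G)$ and perform an exchange argument to make every leaf-simplicial vertex a leaf of a non-trivial tree. By part (1), each leaf-simplicial vertex $v$ is either a trivial tree $\{v\}$ or a leaf of a non-trivial tree. So the only thing to fix is the case where $v$ is a trivial tree. Let $v$ be such a vertex with $N_G(v) = K$ a clique. Since $G$ is non-trivial and connected, $v$ has at least one neighbour $u \in K$. Consider the tree $T_u$ containing $u$. The idea is to attach $v$ to $T_u$ as a new leaf by adding the edge $\{v,u\}$, and delete the trivial tree $\{v\}$; this produces a family with one fewer tree, so if it is still a valid tree cover it contradicts optimality — but wait, it has fewer trees than an optimal cover, which is impossible, so this shows no such $v$ can be a trivial tree after all, provided the modified family really is a tree cover. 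So the real content is: check that $T_u \cup \{v\}$ (with the edge $\{v,u\}$ added) is still an induced tree. It is acyclic because $v$ is a leaf there; the subtle point is whether it is \emph{induced} — I need that $v$ has no neighbour in $V(T_u)$ other than $u$. This may fail in general, so instead of a fixed $u$ I will choose $u$ carefully, or argue by a counting/minimality contradiction directly: assume $\mathcal{T}(G)$ minimizes the number of leaf-simplicial vertices that form trivial trees, and derive a contradiction by the attachment move together with possibly re-rooting.

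The main obstacle is exactly this "induced" condition in part (2): when I attach a leaf-simplicial vertex $v$ to the tree $T_u$ containing a neighbour $u$, $v$ could be adjacent in $G$ to other vertices of $T_u$. I expect to resolve it using the leaf-simplicial hypothesis together with the structure of perfect elimination: since $v$'s neighbourhood $K$ is a clique and \emph{no neighbour of $v$ is simplicial}, every vertex of $K$ has a neighbour outside $K \cup \{v\}$, which constrains how $K$ can be distributed among the trees of $\mathcal{T}(G)$. Concretely, I will argue that some neighbour $u$ of $v$ can be chosen so that $u$ is an endpoint (leaf or isolated) of its tree, or so that all of $v$'s neighbours in $T_u$ coincide with $u$; failing a clean choice, I will instead use the output $\mathcal{T}$ of Algorithm {\sc Zplus-Chordal}, which by Corollary~\ref{cor:correctness} is an optimal positive zero forcing tree cover, and show directly from the algorithm — using that a leaf-simplicial $v$ is never made black unless forced (its incident edges get coloured when an earlier simplicial neighbour is processed, but it has no simplicial neighbour, so $v$ itself is the first of its closed neighbourhood to be processed) — that $v$ is coloured white and hence becomes a leaf of a non-trivial forcing tree. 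That algorithmic route sidesteps the exchange argument entirely and is likely the cleanest path to part (2).
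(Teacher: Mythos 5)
Your part (1) is correct and is essentially the paper's argument: the neighbourhood of a simplicial vertex is a clique, so an induced tree can contain at most one of its neighbours, forcing degree at most one. For part (2), however, there is a genuine gap, and it sits exactly where you flag it. The paper closes it with a two-case analysis on the clique $C$ consisting of $v$ and its neighbours. Because every tree in the cover is induced, a tree can meet $C$ in at most two vertices, and in exactly two only if the edge joining them is a tree edge. Case one: no tree of $\mathcal{T}(G)$ uses an edge of $C$; then every tree meets $C$ in at most one vertex, so $\{v\}$ can be merged onto any neighbour's tree while staying induced --- this \emph{decreases} the number of trees and contradicts optimality, so this case cannot occur. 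Case two: some tree $T_u$ contains an edge $\{u,w\}$ of $C$, hence meets $C$ in exactly $\{u,w\}$; delete $\{u,w\}$, attach $v$ to the piece containing $u$ (which now meets $C$ only in $u$, so the result is induced), and note that the tree count is unchanged and that $w$ is not simplicial (by leaf-simpliciality of $v$), so the exchange can be iterated without creating new offending trivial trees. Your proposal gestures at ``choosing $u$ carefully'' or ``re-rooting'' but never supplies this argument, which is the entire content of part (2).

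Your fallback route --- take the output of Algorithm {\sc Zplus-Chordal} and invoke Corollary~\ref{cor:correctness} --- does not repair this, because that output is an optimal \emph{positive zero forcing} tree cover, with $Z_+(G)$ trees, whereas the lemma is about optimal tree covers, with $\T(G)$ trees, and these can differ for non-trivial chordal graphs. For instance, let $G$ be $K_4$ on $\{a,b,c,d\}$ with a pendant vertex $e$ attached to $a$: here $cc(G)=2$, so $Z_+(G)=3$, yet the induced path on $\{e,a,b\}$ together with the edge $\{c,d\}$ is a tree cover of size $2$, so $\T(G)=2$. The algorithm's output is therefore not an optimal tree cover, and no property of it says anything about the covers the lemma quantifies over. (A further soft spot in that route: a vertex with no simplicial neighbour in $G$ can still have neighbours that become simplicial after earlier deletions, so your claim that a leaf-simplicial vertex is processed before all of its neighbours is also unjustified.)
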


\begin{proof}
  Let $v$ be a simplicial vertex of $G$.  Let $\mathcal{T}(G)$ be an
  optimal tree cover of $G$. Let $T_v$ be the tree in $\mathcal{T}(G)$
  that contains $v$; this means that $T_v$ is an induced tree in
  $G$. The neighbours of $v$ form a clique in $G$, so at most one of
  them can be in $T_v$.  This implies that the degree of $v$ in $T_v$ is
  less than $2$; thus either $T_v$ only contains the vertex $v$ or $v$
  is a leaf in $T_v$.

  Now further assume that $v$ is a leaf-simplicial vertex and suppose
  that $T_v$ is a trivial tree, this is, it only contains the vertex
  $v$. We will show that $\mathcal{T}(G)$ can be transformed into a
  new optimal tree covering of $G$ in which $v$ is a leaf of a
  non-trivial tree. Let $C$ be in the clique in $G$ that contains $v$
  and all its neighbours. We will consider two cases.

  First, assume that no edge of $C$ is also an edge of a tree in
  $\mathcal{T}(G)$.  For any $u \in C$, let $T_u$ be the tree in
  $\mathcal{T}(G)$ that contains $u$. Then $T_v$ and $T_u$ can be
  merged by adding the edge $\{u,v\}$. But this is a contradiction, as
  it implies that $\mathcal{T}(G)$ is not an optimal tree covering of
  $G$.

  Second, assume that there is an edge $\{u, w\}$ in $C$ that is also
  an edge of the tree $T_u$ in $\mathcal{T}(G)$. We can split $T_u$
  into two subtrees by deleting the edge $\{u,w\}$, and then merge
  $T_v$ and one subtree by adding the edge $\{v,u\}$ (the other tree
  will contain $w$, which is not simplicial, so it will not be a tree
  that contains only one leaf-simplicial vertex). Thus we obtain
  another optimal tree cover of $G$, in which $v$ is a leaf in a
  non-trivial tree.  By using the above operation, we can transform all
  trivial trees in $T$ that contain a leaf-simplicial vertex so that
  these vertices are leaves in trees for another optimal tree cover.
\end{proof}

We are now in a position to verify that the positive zero forcing
number can be computed in linear time (in terms of the number of edges
and vertices of $G$), whenever $G$ is chordal.

\begin{theorem} \label{thm:runtime} Let $G$ be a chordal graph with
  $n$ vertices and $m$ edges. Then Algorithm {\sc Zplus-Chordal} can
  be implemented to find an optimal positive zero forcing set of $G$
  and an optimal positive zero forcing tree cover of $G$ in $O(n+m)$ time.
\end{theorem}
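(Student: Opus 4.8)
The plan is to show that every step of Algorithm {\sc Zplus-Chordal} can be carried out within the time bounds afforded by a perfect elimination ordering and its associated data structures. First I would recall that the lexicographic breadth-first search of Rose, Tarjan, and Lueker computes a perfect elimination ordering $(v_1,\dots,v_n)$ of a chordal graph in $O(n+m)$ time; this handles Step~\ref{alg1:s0}. The key bookkeeping observation is that in Step~\ref{alg1:s1}, the clique $C_i$ consists of $v_i$ together with its \emph{later} neighbours in the elimination order (those with larger index), so we never actually need to build the graphs $G_i$ explicitly: for each vertex $v_i$ we simply iterate over its original neighbour list and keep those not yet removed, which we can track with a boolean ``removed'' flag. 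Summing over all $i$, the total work for identifying all the cliques $C_i$ is $\sum_i \deg_G(v_i) = 2m$, hence $O(n+m)$.

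Next I would argue that the edge-colouring in Step~\ref{alg1:s2} can be amortized. Each edge of $G$ is coloured exactly once over the whole run of the algorithm — either black (if it is the chosen incident edge $e$ at the iteration where its lower-indexed endpoint becomes $v_i$ and is still uncoloured) or red. To check whether there is an uncoloured edge incident to $v_i$ and, if so, to colour the remaining edges of $C_i$, it suffices to scan the edges of $C_i$; an edge $\{v_i, v_j\}$ with $j>i$ is encountered for the first (and only) time at iteration $i$, while an edge $\{v_j, v_i\}$ with $j<i$ was already coloured at iteration $j$. Maintaining an array that records, for each vertex, the black edge chosen when it was coloured white (the $b_j$ of Lemma~\ref{lemma:Z+}) costs $O(1)$ per white vertex. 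Therefore the cumulative cost of Steps~\ref{alg1:s2}--\ref{alg1:s3} over all iterations is again $O(m)$ for edge operations plus $O(n)$ for vertex operations.

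Finally, Step~\ref{alg1:s5} requires removing the red edges and extracting the connected components of the remaining graph; since the black edges form the forest $T_{\mathrm{black}}(G)$ by Theorem~\ref{thm:forest}, this is a single pass of breadth-first or depth-first search over at most $n$ vertices and $n-1$ edges, costing $O(n)$. The output set $V_{\mathrm{black}}(G)$ is read off the vertex colours in $O(n)$ time. Adding the contributions of all steps — LexBFS in $O(n+m)$, clique identification in $O(n+m)$, edge colouring in $O(n+m)$, component extraction in $O(n)$ — gives the claimed $O(n+m)$ total, and correctness of the output is guaranteed by Theorem~\ref{thm:correctness} and Corollary~\ref{cor:correctness}.

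I expect the main obstacle to be the careful amortization in Step~\ref{alg1:s2}: one must be precise that ``colour all other uncoloured edges of $C_i$ red'' does not accidentally re-scan previously coloured edges more than a constant number of extra times, and that detecting whether \emph{any} uncoloured edge is incident to $v_i$ is folded into the same scan of $C_i$ that colours them. The right invariant is that at iteration $i$, the only uncoloured edges incident to $v_i$ are exactly the edges $\{v_i, v_j\}$ with $j>i$; establishing this invariant (it follows from the fact that all edges among $\{v_1,\dots,v_{i-1}\}$ and between that set and $\{v_i,\dots,v_n\}$ were coloured at earlier iterations, because at each iteration the current simplicial vertex's incident edges are all coloured and then removed) is the crux of the linear-time bound. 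Everything else is a routine application of standard chordal-graph data structures.
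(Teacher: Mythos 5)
Your proposal is correct and follows essentially the same route as the paper's proof: LexBFS to obtain the perfect elimination ordering in $O(n+m)$ time, an amortized argument that each vertex and each edge is coloured (and checked) only a bounded number of times across the main loop, and a final linear-time pass to extract the components of the black-edge forest, with correctness delegated to Theorem~\ref{thm:correctness} and Corollary~\ref{cor:correctness}. You simply supply more implementation detail (the ``removed'' flags, the observation that $C_i$ consists of $v_i$ and its later neighbours) than the paper's own, terser argument.
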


\begin{proof}

  The lexicographic breadth-first search algorithm is a linear
  time algorithm that finds a lexicographic ordering of the vertices
  of $G$~\cite{RTL76}. The reverse of a lexicographic ordering of a
  chordal graph is always a perfect elimination ordering. Thus,
  Step~\ref{alg1:s0} requires $O(n+m)$ time.

  In the loop between Step~\ref{alg1:s1} and Step~\ref{alg1:s4} in the
  algorithm, each edge and each vertex is coloured exactly once and
  deleted from the graph once. In Step~\ref{alg1:s2} every edge is
  checked at most once to find uncoloured edges. The total running time of the loop is $O(n+m)$. 

  Finally, it takes linear time to remove all red edges from $G$ in
  Step~\ref{alg1:s5}. Therefore, the running time of the algorithm is
  $O(n+m)$.
\end{proof}

\section{Minimum Forest Covers for Graphs} \label{sec:min-forest}

In this section, we consider the structure of the trees in the zero
forcing tree covers of the graph. So for a given graph $G$ and a
positive integer $\ell$, among all the positive zero forcing tree covers of $G$
with size $\ell$, we want to minimize the number of positive zero forcing trees
that are non-trivial trees. We call this problem \textsl{Min-Forest}. If we
consider the corresponding parallel fast-mixed searching model, each non-trivial
positive zero forcing tree corresponds to an induced tree cleared by a ``mobile''
searcher and each trivial tree corresponds to an ``immobile''
searcher (perhaps a trap or a surveillance camera). Typically, the
goal is to minimize the number of mobile searchers 
among parallel fast-mixed search strategies with a given number of searchers.
The decision version of the Min-Forest problem is as follows.


\begin{quote}
{\sc Min-Forest} \\
{\bf Instance}: A graph $G$ and positive integers $k$ and $\ell$. \\
{\bf Question}: Does $G$ have a positive zero forcing tree cover of size $\ell$
in which there are at most $k$ positive zero forcing trees that are non-trivial?
\end{quote}


A \textsl{split graph} is a graph in which the vertices can be
partitioned into two sets $C$ and $I$, where $C$ induces a clique and
$I$ induces an independent set in the graph. It is not difficult to
show that a graph is split if and only if it is chordal and its
complement is also chordal.

For a graph $G$ a set $U \subset V(G)$ is a \textsl{vertex cover} of
$G$ if every edge of $G$ is incident to at least one vertex in $U$.

\begin{theorem}\label{thm:NPC1} {\sc Min-Forest} is NP-complete. The
  problem remains NP-complete for split graphs whose simplicial
  vertices all have degree $2$.
\end{theorem}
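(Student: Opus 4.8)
The plan is to reduce from \textsc{Vertex Cover} on arbitrary graphs, which is classically NP-complete, and to arrange the gadget so that the resulting instance is a split graph whose simplicial vertices all have degree $2$. First I would observe that membership in NP is routine: a positive zero forcing tree cover of size $\ell$ together with the classification of its trees as trivial/non-trivial is a polynomial-size certificate, and one can verify in polynomial time (via Theorem \ref{thm:pfms=Z+} and the forcing-tree description, or directly by simulating the positive colour change rule) that it is a valid cover with at most $k$ non-trivial trees. So the work is in the hardness reduction.

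Given a \textsc{Vertex Cover} instance $(H, t)$ with $H = (V_H, E_H)$, the idea is to build a split graph $G$ as follows: take a clique $C$ on the vertex set $V_H$ (one clique vertex per vertex of $H$), and for each edge $e = \{x,y\} \in E_H$ introduce a new simplicial vertex $w_e$ adjacent exactly to $x$ and $y$ in $C$. Then $C$ is a clique, the set $I = \{w_e : e \in E_H\}$ is independent, every $w_e$ has degree $2$, and these are exactly the simplicial vertices of $G$ (vertices of $C$ are not simplicial once $H$ has two independent edges on disjoint supports), so $G$ is a split graph of the required special form. The intended correspondence is: a positive zero forcing tree cover of $G$ in which the non-trivial trees are precisely the stars centred at the clique vertices of a vertex cover $U \subseteq V_H$ of $H$, each such star absorbing the edge-vertices $w_e$ it covers, with the remaining clique vertices $V_H \setminus U$ as trivial trees. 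One checks this is a valid positive zero forcing tree cover: after blacking $V_H \setminus U$ and one ``root'' inside each cover-star, the components of the complement are handled because each $w_e$ has at least one neighbour in $U$, and by the positive colour change rule that neighbour can force $w_e$ (its only white neighbour in the relevant component $W_i \cup B$, since $w_e$ has degree $2$ and the rest of $C$ is black). This gives a tree cover of size $|V_H|$ — actually one should recompute: the size is $|V_H| - (\text{number of non-trivial trees})$ saved, so set $\ell := |V(G)| - cc$-type quantity; the clean way, matching Corollary \ref{cor:cc}, is that $Z_+(G) = |V(G)| - cc(G)$ and $cc(G)$ is realised by a clique cover using the cover-stars $\{x\} \cup \{w_e : x \in e\}$ for $x \in U$ plus singletons — so I would set $\ell := Z_+(G)$ and $k := t$, and prove the equivalence ``$H$ has a vertex cover of size $\le t$ iff $G$ has an optimal positive zero forcing tree cover with $\le t$ non-trivial trees.''

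The two directions: ($\Rightarrow$) a vertex cover $U$ of size $t$ yields, as above, an optimal positive zero forcing tree cover with exactly $|U| \le t$ non-trivial trees (the cover-stars) — here I lean on Lemma \ref{lemma:Z+} / Theorem \ref{thm:correctness}-style arguments to confirm optimality and on Lemma \ref{lemma:tc} to normalise. ($\Leftarrow$) Conversely, suppose $G$ has an optimal positive zero forcing tree cover $\mathcal{T}$ with at most $k = t$ non-trivial trees. By Lemma \ref{lemma:tc}(1), every $w_e$ (simplicial, degree $2$) is either a trivial tree or a leaf of a non-trivial tree. Since $\mathcal{T}$ is optimal, $\sum_T (|V(T)|-1)$ equals the number of white vertices $= cc(G)$, which forces most $w_e$ to be leaves of non-trivial trees rather than trivial trees — I would argue (counting edges: a non-trivial tree in $G$ that is an induced subgraph can contain at most one edge inside the clique $C$, so its edges are essentially $w_e$-to-$C$ pendant edges plus at most one $C$-edge) that each non-trivial tree, after the normalisation of Lemma \ref{lemma:tc}, corresponds to a clique-vertex together with a set of incident $w_e$'s, i.e.\ its $C$-vertices form a subset covering those edges; hence the $C$-vertices appearing in non-trivial trees form a vertex cover of $H$ of size $\le k = t$. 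The main obstacle I anticipate is this last step: an optimal tree cover need not literally consist of stars, and a non-trivial induced tree in $G$ can dip into $C$ along one edge, so I must carefully use induced-ness and the optimality/size constraint to show that no non-trivial tree can ``cheaply'' cover edges of $H$ via structure more clever than a star at a single clique vertex — equivalently, that the number of non-trivial trees is at least the vertex cover number of $H$. Handling that edge-counting/exchange argument cleanly, and checking the degenerate cases where $H$ has isolated vertices or few edges (so $G$ might be trivial or split in a degenerate way), is where the real care is needed; everything else is bookkeeping.
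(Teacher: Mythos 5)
Your overall strategy (reduce from \textsc{Vertex Cover}, build a split graph with one clique vertex per vertex of $H$ and one degree-$2$ simplicial vertex per edge of $H$, and match cover vertices with star-shaped forcing trees) is the same as the paper's, but your construction omits the paper's auxiliary gadget (the extra clique vertices $x_1,x_2$ and the pendant-pair vertex $y$), and that omission is fatal rather than cosmetic. With your $G$ one has $|V(G)|=n+m$ and $cc(G)=m+1$, so $\ell=Z_+(G)=n-1$, while the clique $C$ has $n$ vertices. Since an induced tree meets the clique in at most two vertices, and (by the forcing argument the paper also uses) at most one tree in a positive zero forcing tree cover can contain two clique vertices, a simple count shows that \emph{every} optimal cover consists of exactly one tree with two $C$-vertices and $n-2$ trees with one $C$-vertex each, with every $w_e$ hanging as a leaf off an endpoint of $e$. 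That distinguished two-$C$-vertex tree can be a ``double star'': an edge $\{u_1,u_2\}$ of $C$ with $w_e$'s pendant on both ends, provided $\{u_1,u_2\}\notin E(H)$. Consequently a vertex cover of size $t$ containing two $H$-non-adjacent vertices yields an optimal cover with only $t-1$ non-trivial trees, and conversely an optimal cover with $k$ non-trivial trees only yields a vertex cover of size $k+1$. The equivalence you intend to prove is therefore false: for $H=2K_2$ (so $\tau(H)=2$) your $G$ is $K_4$ plus two edge-vertices, and the induced path $w_{e_1},u_1,u_2,w_{e_2}$ through a non-adjacent pair, together with two trivial trees, is an optimal positive zero forcing tree cover with a single non-trivial tree, so \textsc{Min-Forest} answers ``yes'' for $k=1$ while $H$ has no vertex cover of size $1$. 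You correctly flagged the danger that a tree might ``dip into $C$ along one edge,'' but you propose to rule this out, and it cannot be ruled out --- it is forced to happen exactly once. (A first symptom of the same problem: your forward construction of $t$ stars plus $n-t$ singletons has $n$ trees, one more than $\ell=Z_+(G)$, so it is not even a cover of the size you fixed.)

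The paper resolves precisely this off-by-one by enlarging the clique to $\{v_1',\dots,v_n',x_1,x_2\}$ and attaching $y$ to $x_1,x_2$: the unavoidable two-clique-vertex tree is then spent harmlessly on $x_1,x_2,y$ (the proof's three-case analysis shows that in every case the clique vertices carrying pendant edge-vertices number at most $k$), and the forward direction extends one star through $x_1$ to absorb $y$ so that the count comes out to exactly $n+1$ trees with $k$ non-trivial ones. To repair your proof you need some such gadget (or an equivalent device that neutralises the one tree containing a clique edge); the reduction from \textsc{Vertex Cover} on cubic graphs rather than arbitrary graphs is what the paper uses to get the echinus-graph corollary, but the gadget, not the cubic restriction, is the essential missing ingredient. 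Your NP-membership argument and the observation that the simplicial vertices of $G$ are exactly the degree-$2$ edge-vertices (given that $H$ has no isolated vertices) are fine.
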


\begin{proof}
It is easy to verify that the problem is in NP. From \cite{GJ79}, we know that the vertex cover problem is NP-complete for cubic graphs. We will construct a reduction from this problem.

  Let $H$ be a cubic graph with vertex set $\{v_1, \dots, v_n\}$ and
  edge set $\{e_1, \dots, e_m\}$.  We construct a connected chordal
  graph $G$ using $H$.  First set
\[
V(G): = \{v_1', \dots, v_n', \; e_1', \dots, e_m', \; x_1, x_2, y\}
\]
where $v_i'$ corresponds to the vertex $v_i$ from $H$ and $e_i'$
corresponds to the edge $e_i$ in $H$. We construct a clique in $G$
with the vertices in the set $\{v_1', \dots, v_n', x_1, x_2\}$. For
each $e_i'$ corresponding to the edge $e_i = \{v_{i_1}, v_{i_2}\}$ in
$H$, we connect the vertex $e_i'$ to vertices $v_{i_1}'$ and
$v_{i_2}'$ in $G$. We finish the construction of $G$ by connecting $y$
to both vertices $x_1$ and $x_2$.  It is easy to see that the graph
$G$ is a connected chordal graph and can be constructed in polynomial
time.

Let $k$ be a positive integer. We will show that $G$ has a zero
forcing tree cover of size $n+1$ in which there are at most $k$
non-trivial trees if and only if there is a vertex cover
of $H$ of size at most $k$.

Initially, suppose that $U \subseteq V(H)$ is a vertex cover of $H$ of size $k$
and let $U'$ be the set of vertices in $G$ that correspond to vertices
of $U$. We will show that the vertices in $U'$ are the only vertices
in a positive zero forcing set for $G$ of size $n+1$ that are the roots for
non-trivial positive zero forcing trees.

Define the set $S=\{e_1', \dots, e_m', y\}$. Every vertex in $S$ is a
simplicial vertex in $G$, so we can assume that these are the first
$m+1$ vertices in the perfect elimination ordering of the
vertices.  Any vertex of $S$ has exactly two neighbours in the clique
induced by $\{v_1', \dots, v_n', x_1, x_2\}$. Since $U$ is a vertex
cover in $H$, every vertex $e_i'$ in $G$ is adjacent to at least one
vertex $v_j'$ in $G$ whose corresponding vertex $v_j$ is in
$U$. Finally, since $y$ is adjacent to $x_1$, each vertex $u \in S$
has at least one neighbour in $U' \cup \{x_1\}$; denote one of these
neighbours by $u'$.

Run Algorithm {\sc Zplus-Chordal} on $G$, assuming that the vertices
in $S$ occur first in the ordering.  For each $u \in S$, at
Step~\ref{alg1:s2} in the algorithm, colour the edge $\{u,u'\}$ black
and colour the vertex $u$ white.  At Step~\ref{alg1:s4} the vertex $u$
is removed and after $m+1$ iterations all vertices of $S$ are
removed. At this point, $x_1$ is a simplicial vertex.  Assume that it
is next in the perfect elimination ordering and at Step~\ref{alg1:s2}
colour the edge $\{x_1, u'\}$ black, where $u'$ is an arbitrary vertex
in $U'$ and in Step~\ref{alg1:s4} the vertex $x_1$ is removed.

At this point all the remaining edges have been coloured and all the
vertices in the set $\{v_1', \dots, v_n', x_2\}$ are coloured black
and form an (optimal) positive zero forcing set for $G$. Let
$V_{\mathrm{black}}(G)$ be the graph formed by taking all edges (and
their endpoints) that were coloured black in Algorithm {\sc
  Zplus-Chordal}. The number of non-trivial components in
$T_{\mathrm{black}}$ is no more than $|U'|=k$. Thus, $G$ has a zero
forcing tree cover of size $n+1$ in which there are at most $k$
non-trivial trees.

Conversely, suppose that $\mathcal{F}$ is a positive zero forcing tree
cover of $G$ with size $n+1$ that contains $k$ non-trivial
trees. Assume that these non-trivial trees are $\{F_1, F_2, \dots,
F_k\}$. From $\mathcal{F}$, we will find a vertex cover of $H$ with size at most $k$.

Let $V'=\{v_1', \dots, v_n', x_1, x_2\}$. 
It is not hard to
see that any set of $n+1$ vertices from $V'$ form a positive zero forcing set for $G$. 

For each non-trivial tree $F_j$ in $\mathcal{F}$, Since it is an induced tree of $G$
and $G[V']$ is a clique, we know that
$F_j$ can contain at most two vertices from $V'$.
If $F_j$ contains two vertices $a$ and $b$ from $V'$,
since each non-trivial tree in $\mathcal{F}$ contains exactly one initial black vertex, 
we know that either $a$ forces $b$ to black or $b$ forces $a$ to black.
Suppose that there are two non-trivial trees in $\mathcal{F}$, say $F_1$ and $F_2$,
such that $F_i$ ($i=1, 2$) contains two vertices $a_i$ and $b_i$ of $V'$ and $a_i$ forces $b_i$ to black.
Suppose that $a_1$ forcing $b_1$ is before $a_2$ forcing $b_2$ in the positive zero forcing process.
Then when $a_1$ forces $b_1$, there are at least two white vertices ($b_1$ and $b_2$) 
in the same component which are adjacent to $a_1$. This contradicts to the colour change rule.
Thus, at most one non-trivial tree in $\mathcal{F}$ that contains two vertices from $V'$.

Since $|V'|=n+2$, we know that among the $n+1$ positive zero forcing trees
in $\mathcal{F}$, only one of them contains two vertices of $V'$ and all others contain only one 
vertex of $V'$. Thus, no vertex in $V(G)\setminus V'$ can form a trivial tree in $\mathcal{F}$.
Without loss of generality, suppose that $F_1$ contains two vertices of $V'$ and 
each $F_i$, $2 \leq i \leq k$, contains exactly one vertex of $V'$. Since each vertex
in $V(G)\setminus V'$ forms a 3-cycle with two vertices in $V'$, we know it must be 
a leaf of a tree in $\{F_1, F_2, \dots, F_k\}$. We have three cases for $F_1$.
\begin{enumerate}
\item
If $F_1$ contains two vertices of 
$\{v_1', \dots, v_n'\}$, then one tree in $\{F_2, \dots, F_k\}$, say $F_k$, consists of only one edge 
that connects $y$ to $x_1$ or $x_2$. So $\{F_1, F_2, \dots, F_{k-1}\}$ contains a subset of $k$ vertices 
$U' \subseteq \{v_1', \dots, v_n'\}$, and each vertex of $\{e_1', \dots, e_m'\}$ is adjacent to one vertex of
$U'$. Thus, the corresponding vertex set of $U'$ in $H$ is a vertex cover of $H$ of size $k$.
\item
If $F_1$ contains one vertex of $\{v_1', \dots, v_n'\}$ and one vertex of $\{x_1, x_2\}$, say $x_1$, 
then one tree in $\{F_2, \dots, F_k\}$, say $F_k$, consists of only one edge 
that connects $y$ to $x_2$.
Thus each tree in $\{F_1, \dots, F_{k-1}\}$ contains one vertex of  $\{v_1', \dots, v_n'\}$.
Let $U' \subseteq \{v_1', \dots, v_n'\}$ be the set of these $k-1$ vertices.  Since each vertex of $\{e_1', \dots, e_m'\}$ is adjacent to one vertex of $U'$, the corresponding vertex set of $U'$ in $H$ is a vertex cover of $H$ of size $k-1$.
\item
If $F_1$ contains both $x_1$ and $x_2$, 
then each tree in $\{F_2, \dots, F_k\}$ contains one vertex of  $\{v_1', \dots, v_n'\}$.
Let $U' \subseteq \{v_1', \dots, v_n'\}$ be the set of these $k-1$ vertices. Since each vertex of $\{e_1', \dots, e_m'\}$ is adjacent to one vertex of $U'$, the corresponding vertex set of $U'$ in $H$ is a vertex cover of $H$ of size $k-1$.
\end{enumerate}
From the above cases, we know that $H$ has
a vertex cover of size at most $k$.
\end{proof}

An \textsl{echinus graph} is a split graph with vertex set $\{C,I\}$,
where $C$ induces a clique and $I$ is an independent set, such that
every vertex of $I$ has two neighbours in $C$ and every vertex of $C$
has three neighbours in $I$. It is easy to see that echinus graphs are
special chordal graphs. From the proof of Theorem~\ref{thm:NPC1}, we
have the following.

\begin{corollary}\label{cor:NPC2}
{\sc Min-Forest} remains NP-complete even for echinus graphs.
\end{corollary}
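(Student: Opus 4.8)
The plan is to adapt the construction from the proof of Theorem~\ref{thm:NPC1} so that the resulting split graph becomes an echinus graph, and then check that essentially the same reduction still works. In that construction the clique $C=\{v_1',\dots,v_n',x_1,x_2\}$ and the independent set $I=\{e_1',\dots,e_m',y\}$ already have the property that every vertex of $I$ has exactly two neighbours in $C$, and, because $H$ is cubic, every $v_i'$ has exactly three neighbours in $I$; the sole obstruction to being echinus is that $x_1$ and $x_2$ have only the single neighbour $y$ in $I$ instead of three. I would therefore form $G'$ from $G$ by adding two new vertices $w_1,w_2$, each made adjacent to exactly $x_1$ and $x_2$. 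With $I'=I\cup\{w_1,w_2\}$ each vertex of $I'$ still has exactly two neighbours in $C$, and now every vertex of $C$ has exactly three neighbours in $I'$ (the $v_i'$ as before, and $x_1,x_2$ with $I'$-neighbours $y,w_1,w_2$), so $G'$ is an echinus graph; it is connected, chordal, and constructible in polynomial time. The claim to prove is then: for a positive integer $k$, the graph $G'$ has a positive zero forcing tree cover of size $n+1$ containing at most $k$ non-trivial trees if and only if $H$ has a vertex cover of size at most $k$.

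For the ``if'' direction, given a vertex cover $U$ of $H$ with $|U|\le k$ and its image $U'$ in $G'$, I would run Algorithm {\sc Zplus-Chordal} with the simplicial vertices $w_1,w_2,y,e_1',\dots,e_m'$ listed first in the perfect elimination ordering: colour black the edges from each of $w_1,w_2,y$ to $x_1$, and for each $e_i'$ colour black the edge from $e_i'$ to one of its two $C$-neighbours that lies in $U'$ (one exists since $U$ covers $H$); after these iterations $x_1$ is simplicial, and its black edge is taken to an arbitrary vertex of $U'$, after which every remaining vertex is coloured black. Exactly as in the proof of Theorem~\ref{thm:NPC1}, and using Corollary~\ref{cor:correctness}, this yields a positive zero forcing tree cover of size $n+1$; moreover in $T_{\mathrm{black}}(G')$ the vertices $w_1,w_2,y$ and $x_1$ all lie in the connected component of a single vertex of $U'$, so the number of non-trivial trees is at most $|U'|\le k$.

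For the ``only if'' direction, suppose $\mathcal F$ is such a tree cover. Since $C$ is a clique of size $n+2$ and each of the $n+1$ induced trees of $\mathcal F$ meets $C$ in at most two vertices, a counting argument together with the colour-change-rule argument from the proof of Theorem~\ref{thm:NPC1} (two trees, each forcing between its two clique vertices, would make some vertex have two white clique neighbours in one component) shows that exactly one tree $F^*$ meets $C$ in two vertices while each of the other $n$ trees meets $C$ in exactly one vertex; in particular every tree of $\mathcal F$ contains a vertex of $C$, so no vertex of $V(G')\setminus C$ forms a trivial tree. By Lemma~\ref{lemma:tc} each $e_i'$ is then a leaf of a non-trivial tree, and its unique neighbour in that tree is one of its two $C$-neighbours $v_{i_1}',v_{i_2}'$. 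Letting $N$ be the set of non-trivial trees of $\mathcal F$ (so $|N|\le k$) and $U'$ the set of vertices among $v_1',\dots,v_n'$ that lie in some tree of $N$, the description of the $e_i'$ shows that the image $U$ of $U'$ is a vertex cover of $H$. The clique vertices appearing in trees of $N$ number exactly $|N|+1$ (since $F^*\in N$ contributes two and every other tree of $N$ contributes one), and I would note that the non-trivial tree containing $y$ also contains $x_1$ or $x_2$; hence at least one of those $|N|+1$ clique vertices is not of $v$-type, so $|U'|\le|N|\le k$. The main obstacle is exactly this last point: arranging the gadget $\{x_1,x_2,y,w_1,w_2\}$ so that it forces one non-trivial tree ``off the books'', which is what brings the vertex-cover bound down from $k+1$ to $k$. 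Finally, since {\sc Min-Forest} is in NP, echinus graphs are chordal, and the whole reduction is polynomial, Corollary~\ref{cor:NPC2} follows.
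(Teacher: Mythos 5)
Your proposal is correct and follows essentially the same route as the paper: augment the Theorem~\ref{thm:NPC1} construction with two extra simplicial vertices attached to $x_1$ and $x_2$ so that every clique vertex gains three independent-set neighbours, then rerun the same reduction. Your version is in fact more careful than the paper's one-line proof --- in particular you make explicit that each new vertex must be adjacent to \emph{both} $x_1$ and $x_2$ (which is what the echinus condition requires), and you verify the counting step showing the gadget tree through $y$ absorbs one non-trivial tree so the vertex-cover bound stays at $k$.
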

\begin{proof}
  In the proof of Theorem~\ref{thm:NPC1}, we can modify the
  construction of $G$ by adding two more vertices $y'$ and $y''$ and
  connecting them to vertices $x_1$ and $x_2$, respectively. It is
  easy to see that the new graph $G'$ is an echinus graph. Similarly,
  we can show that $G'$ has a positive zero forcing tree cover of size $n+1$ in
  which there are at most $k$ non-trivial trees, if
  and only if there is a vertex cover of $H$ with size at most $k$.
\end{proof}

If a graph $G$ has a positive zero forcing tree cover of size $\ell$ in which
there are at most $k$ non-trivial trees, then for any $n \geq \ell' > \ell$, $G$
has a positive zero forcing tree cover of size $\ell'$, with at most $k$ non-trivial
trees. Note that the smallest possible value of $\ell$ is the positive zero forcing
number. So next we consider the case when $\ell$ equals the positive zero forcing number.
The following theorem presents a characterization of the chordal graphs for
which there exists an optimal positive zero forcing tree cover that contains
only one non-trivial tree.

\begin{theorem}\label{thm:1-tree}
  Let $G$ be a connected non-trivial chordal graph. There is an optimal
  positive zero forcing tree cover of $G$ in which only one tree
  is non-trivial if and only if for every maximal clique $C$ in $G$,
  there are two vertices $x_C,y_C \in C$ such that any other maximal
  clique $C'$ in $G$ with $V(C') \cap V(C) \neq \emptyset$ must
  contain exactly one of $x_C$ and $y_C$.
\end{theorem}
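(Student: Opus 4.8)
The plan is to characterize when a single non-trivial tree can cover enough of the graph. Recall from Corollary \ref{cor:cc} that for a chordal graph, an optimal positive zero forcing tree cover has exactly $Z_+(G) = |V(G)| - cc(G)$ trees, and the white vertices (those not in the forcing set) are exactly the non-root vertices of the forcing trees. So the number of non-root vertices equals $cc(G)$, and asking for only one non-trivial tree means asking for a single induced tree $T$ whose number of edges equals $cc(G)$, i.e., $T$ has $cc(G)+1$ vertices, exactly one of which (a leaf or endpoint) is the root in the forcing set; all remaining $Z_+(G)-1$ vertices of the forcing set are then trivial (isolated black) trees. Since each edge of $T$ must be used to force exactly one white vertex, and each forced vertex corresponds (via Lemma \ref{lemma:cc} and the clique cover structure) to a distinct clique of an optimal clique cover, the existence of such a $T$ is equivalent to the existence of an induced tree meeting every clique of \emph{some} optimal clique cover in a controlled way.

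For the forward direction, I would take an optimal positive zero forcing tree cover with one non-trivial tree $T$. Run the correspondence with an optimal clique cover $\mathcal{C}$ (Corollary \ref{cor:cc}(1)): each forcing step $u \to w$ along an edge of $T$ is certified by a clique $C_w \in \mathcal{C}$ in which $w$ is the unique white neighbour of $u$. Now fix a maximal clique $C$ of $G$. The key observation is that $T$, being an induced subtree, meets $C$ in at most two vertices (the neighbourhood within a clique has no independent set of size $\geq 2$, so $T\cap C$ induces a clique in a tree, hence at most an edge). If $T\cap C$ has fewer than two vertices, then at most one vertex of $C$ lies in the forcing set, so all but at most one vertex of $C$ is white and must be forced — but each white vertex of $C$ is forced from outside $C$ or along $T$, and counting shows this forces $C$ to be covered by cliques of $\mathcal{C}$ other than a ``private'' one, contradicting optimality unless $C$ itself is not maximal or the $2$-vertex condition holds. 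The cleanest route: show $|V(T) \cap V(C)| = 2$ for every maximal clique $C$, call these two vertices $x_C, y_C$, and then argue that a neighbouring maximal clique $C'$ sharing a vertex with $C$ contains exactly one of $x_C, y_C$ because $T$ is a tree (it cannot contain the cycle that would arise if $C'$ contained both, nor can it be disconnected across the $C$–$C'$ interface if it contained neither).

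For the converse, given the clique condition, I would build the forcing tree directly. Using a clique tree (junction tree) of the chordal graph $G$ — whose nodes are the maximal cliques and where intersections along edges are the minimal separators — select for each maximal clique $C$ the distinguished pair $\{x_C, y_C\}$. The hypothesis says consecutive maximal cliques share exactly one distinguished vertex, so gluing the edges $\{x_C, y_C\}$ along the clique tree produces a connected subgraph $T$ of $G$; since at each shared separator only one distinguished vertex is common, $T$ is acyclic, hence an induced tree. Then verify that rooting $T$ at a leaf gives a valid positive zero forcing process: process simplicial vertices in a perfect elimination order, and at each maximal clique, the two distinguished vertices let the already-black vertex force the (single) remaining white neighbour in that clique's component, exactly as in the analysis of Algorithm {\sc Zplus-Chordal} and Lemma \ref{lemma:Z+}. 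Because $T$ has $cc(G)$ edges (one per maximal clique, after accounting for the clique-tree structure giving $|V(T)| = cc(G)+1$), the forcing set is $T$'s root plus $Z_+(G)-1$ isolated black vertices, so this cover is optimal with exactly one non-trivial tree.

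The main obstacle I anticipate is the bookkeeping in both directions between the clique-tree/separator structure of $G$ and the count $cc(G)$: one must be careful that $cc(G)$ equals the number of maximal cliques only when separators are handled correctly (for chordal graphs $cc(G)$ can be strictly less than the number of maximal cliques), so the tree $T$ must be built so that each of its $cc(G)$ edges is charged to a clique of an \emph{optimal} clique cover, not merely to a maximal clique. Pinning down that the ``exactly one of $x_C,y_C$'' condition is precisely what makes the glued edges form a tree with the right number of edges — neither too many (a cycle, violating inducedness) nor too few (disconnected, failing to reach $cc(G)$ white vertices) — is the heart of the argument, and I would expect to spend most of the effort there, likely via induction on the clique tree.
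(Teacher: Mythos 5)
Your overall strategy is the same as the paper's (show $|V(T)\cap V(C)|=2$ for every maximal clique in one direction; glue the distinguished edges into a tree in the other), but three steps that you treat as routine are exactly where the real work lies, and two of them as you state them do not go through.

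First, in the converse construction you assert that ``consecutive maximal cliques share exactly one distinguished vertex,'' but the hypothesis does not say this. It says that a clique $C'$ meeting $C$ contains exactly one of $x_C,y_C$; it does \emph{not} say that this shared vertex is one of $x_{C'},y_{C'}$. Without that, the edges $\{x_C,y_C\}$ and $\{x_{C'},y_{C'}\}$ need not meet, and your glued subgraph $T$ can be disconnected. The paper closes this gap with an explicit normalization step: whenever a critical vertex of $C$ lies in other maximal cliques $C_1,\dots,C_i$, it may be replaced by any vertex of $D=C\cap C_1\cap\dots\cap C_i$ (any such vertex, paired with the other critical vertex, is again a critical pair), and one then fixes a single representative of $D$ to serve as a critical vertex of all of $C,C_1,\dots,C_i$ simultaneously. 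Only after this normalization do adjacent cliques have adjacent black edges, giving connectivity of $T$. Second, in the forward direction your elimination of the cases $|V(T)\cap V(C)|\in\{0,1\}$ is a gesture (``counting shows\dots''), not an argument; the actual proof is a short exchange argument -- merge two isolated black vertices of $C$ into an edge (case $0$), or attach an isolated black vertex of $C$ to the vertex of $C$ already in $T$ (case $1$) -- producing a tree cover with $m-1$ trees and contradicting optimality.

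Third, and most seriously, your reason why a neighbouring maximal clique $C'$ cannot contain \emph{both} of $x_C,y_C$ is wrong: you say $T$ ``cannot contain the cycle that would arise,'' but no cycle arises. Since $T$ is induced and $x_C,y_C\in V(T)$ lie in a common clique, the edge $\{x_C,y_C\}$ is already in $T$, and $C'$ containing both of these vertices adds nothing to $T$ (indeed $V(C')\cap V(T)=\{x_C,y_C\}$ as well), so $T$ remains a perfectly good tree. The correct argument is again an exchange: because $C$ and $C'$ are distinct maximal cliques, there exist $u'\in V(C)\setminus V(C')$ and $v'\in V(C')\setminus V(C)$, both isolated black vertices of the cover; deleting the edge $\{x_C,y_C\}$ splits $T$ into $T_1\ni x_C$ and $T_2\ni y_C$, and adding $\{x_C,u'\}$ to $T_1$ and $\{y_C,v'\}$ to $T_2$ yields a positive zero forcing tree cover with one fewer tree, contradicting optimality. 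Your ``contains neither'' case is handled in the paper by chordality (an induced cycle of length greater than $3$ would appear), which is closer to what you wrote, but you should make the cycle explicit. As it stands, the proposal identifies the right skeleton but leaves unproved, or proves incorrectly, precisely the exchange arguments and the normalization of critical pairs that constitute the substance of the theorem.
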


\begin{proof}
  Let $\mathcal{C}$ be the set of all maximal cliques in $G$. Since $G$
  is connected and non-trivial, each maximal clique in $\mathcal{C}$
  must contain at least two vertices. 

  First assume that for every $C \in \mathcal{C}$, there is a pair of
  vertices $x_C, y_C$ such that each $C' \in \mathcal{C}$ with $V(C')
  \cap V(C) \neq \emptyset$  contains exactly one of
  $x_C$ and $y_C$. We will call the vertices $x_C$ and $y_C$ the
  \textsl{critical vertices for $C$}. 

  If $C$ contains a critical vertex that is only in $C$, then we call
  this vertex a \textsl{representative} of $C$.  For a maximal clique $C
  \in \mathcal{C}$ let $x_C$ and $y_C$ be critical vertices in $C$ and
  assume that $x_C$ is not a representative of $C$. Let $\{C_1,
  \dots, C_i \}$ be the set of all of the maximal cliques, other than
  $C$, in $\mathcal{C}$ that contain $x_C$.  Define $D = C \cap C_1
  \cap \dots \cap C_i$; clearly $x_C \in D$. In fact, any of the
  vertices in $D$, along with $y_C$, forms a pair of critical vertices
  for $C$. So we can fix any vertex $v_D \in D$ to be the
  representative of $D$.  For each $C_j$ with $1\leq j \leq i$, if its
  critical set does not contain $v_D$, then we use $v_D$ to replace
  the critical vertex in $C_j \cap D$.

  In this way we can normalize all critical vertices so that all of
  them are representative critical vertices. In this way we can insure
  that if cliques $C_i$ and $C_j$ have non-trivial intersection, then
  for each of $C_i$ and $C_j$ one of the critical vertices is in
  $C_i \cap C_j$.

  For each clique $C$, colour the edge between the two critical
  vertices black. Let $T_{\mathrm{black}}(G)$ be a graph formed by all
  black edges. Since $G$ is a connected chordal graph and each maximal
  clique of $G$ contains exactly one black edge, we know that $T =
  T_{\mathrm{black}}(G)$ does not contain a cycle.  If $C_i$ and $C_j$
  are adjacent cliques in $G$ then one of the critical vertices for $C_i$
  and one of the critical vertices of $C_j$ are equal. Thus the black
  edges in $C_i$ and in $C_j$ are adjacent. Since $G$ is connected, $T$
  is also connected.

  Next we will show that $T$ along with the empty trees on the
  vertices in $V(G) \setminus V(T)$ is an optimal positive zero forcing tree
  cover of $G$.

  Let $b$ be a leaf of $T$. Set $b$ and all vertices in $V(G)
  \setminus V(T)$ to be the initial set of black vertices in $G$.
  Since $b$ is a leaf of $T$, it is contained in exactly one maximal
  clique in $\mathcal{C}$; call this clique $C_0$. Denote the critical
  vertices in $C_0$ by $\{a, b\}$. All vertices in $V(C_0) \setminus
  \{a\}$ are initially black and $a$ is the unique neighbour of $b$ in
  $T$ after all black vertices are removed. Thus $b$ forces $a$ to
  black.

  Let $C_0, C_1, \dots, C_i \in \mathcal{C}$ be all maximal cliques
  that contain $a$. Assume that the critical vertices for $C_j$ are
  $\{a, a_j\}$, where $j \in \{1, \dots, i\}$. In each $C_j$ with $1
  \leq j \leq i$, all the vertices in $V(C_j) \setminus \{a_j\}$ are
  initially black and so $a_j$ is the unique neighbour of $a$ on the
  component containing $a_j$ after all black vertices are
  removed. Thus $a$ forces $a_j$ to black. It is easy to see that the
  positive zero forcing process can continue until all white vertices
  of $T$ are forced black. Hence, the tree $T$ along with the vertices
  in $V(G) \setminus V(T)$ is a positive zero forcing tree cover of $G$.

  Next we will show that this positive zero forcing tree is optimal.  To do
  this we will show that $cc(G) = |E(T)|$. Since no clique in $G$
  contains two black edges, we know that $cc(G) \geq |E(T)|$. On the
  other hand, each edge of $G$ is contained in a maximal clique in
  $\mathcal{C}$ so we also have that $cc(G) \leq |\mathcal{C}| =
  |E(T)|$.

Now it follows from Corollary~\ref{cor:cc} that 
\[
Z_+(G) = |V(G)| - cc(G) = |V(G)| - |E(T)|  = |(V(G) \setminus V(T))| +1.
\]
Therefore, $\{T\} \cup (V(G) \setminus V(T))$ is an optimal zero
forcing tree cover of $G$.

Conversely we will assume that $Z_+(G) = m$ and that $G$ has an
optimal positive zero forcing tree cover $\mathcal{T} = \{T, v_1, \dots,
v_{m-1}\}$ in which $T$ is the only non-trivial tree. We will show
that every maximal clique has a pair of critical vertices.  We will
show that for every clique $C \in \mathcal{C}$ we have $|V(C) \cap V(T)|
= 2$.  We have three cases to consider.

\begin{enumerate}
\item There is a $C \in \mathcal{C}$ such that $|V(C) \cap V(T)| =
  \emptyset$. 

  The clique $C$ must contain at least two vertices, assume that these
  are $v_1$ and $v_2$. Then we can remove the two isolated vertices
  $v_1$ and $v_2$ from $\mathcal{T}$ and add the edge $\{v_1,v_2\}$ to
  it. This produces a new positive zero forcing tree cover that contains
  $m-1$. This contradicts the optimality of the original tree cover
  $\mathcal{T}$.

\item There is a $C \in \mathcal{C}$ such that $|V(C) \cap V(T)| =
  1$. 

  Let $V(C) \cap V(T) = \{u\}$ and assume that $v_1$ is also in
  $C$. Thus we can remove the isolated vertex $v_1$ from $\mathcal{T}$
  and add the edge $\{u,v_1\}$ to $T$. This produces a positive zero forcing
  tree cover of $G$ of size $m-1$, which is a contradiction.

\item There is a $C \in \mathcal{C}$ such that $|V(C) \cap V(T)| \geq
  3$. 

  Since $C$ is a clique $V(C) \cap V(T)$ is also clique with at least
  $3$ vertices, but this is impossible since $T$ is a tree.

\end{enumerate}

We claim that for every $C \in \mathcal{C}$ the two vertices in
$V(C) \cap V(T)$ form a critical pair of vertices for $C$. 

Let $C$ be an arbitrary maximal clique in $\mathcal{C}$ and let $V(C)
\cap V(T) = \{u, v\}$. Further, let $\{ C_1, \dots, C_i \}$ be the set
of maximal cliques from $\mathcal{C}$ that have nonempty intersection
with $C$. 

Assume that there is a $j \in \{1, \dots, i\}$ such that $V(C_j) \cap
\{u,v\} = \emptyset$. Then the subgraph induced by the vertices in $T$
along with any vertex in $C \cap C_j$ will include an induced cycle
with more than 3 vertices in $G$; but this contradicts the fact that
$G$ is chordal.

Suppose that there is a $j \in \{1, \dots, i\}$ such that $|V(C_j)
\cap \{u,v\} | = 2$. This implies that $u$ and $v$ are both in $C_j$
and that $V(C_j) \cap V(T) = \{u,v\}$. Since $C$ and $C_j$ are both
maximal cliques, there is a vertex $u' \in V(C) \setminus V(C_j)$ and
a vertex $v' \in V(C_j) \setminus V(C)$. Thus both $u'$ and $v'$ are
isolated vertices in $\mathcal{T}$. If we remove the edge $\{u,v\}$
from $T$, then $T$ is split into trees $T_1$ and $T_2$ such that $T_1$
contains $u$ and $T_2$ contains $v$. We then add edge $\{u,u'\}$ to
$T_1$ and add $\{v,v'\}$ to $T_2$ to obtain a new tree cover
$\mathcal{T}$, which is still a positive zero forcing tree cover of $G$
containing $m-1$ trees. This is a contradiction. Hence, $C_j$ contains
exactly one of $u$ and $v$.
\end{proof}

This result can be generalized to a family of graphs that are not
chordal.  

\begin{lemma} \label{inT}
  Let $G$ be a graph and $T$ an induced tree in $G$. If $|V(T)|-1 =
  cc(G)$, then $G$ has an optimal positive zero forcing set with only one
  non-trivial positive zero forcing tree.
\end{lemma}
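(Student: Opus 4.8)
The plan is to exhibit an explicit optimal positive zero forcing set that realizes the stated property. We may assume $|V(T)|\ge 2$, since $|V(T)|=1$ would force $cc(G)=0$, i.e.\ $G$ edgeless, which is degenerate. Fix a leaf $b$ of $T$ and put $B:=(V(G)\setminus V(T))\cup\{b\}$, so that $|B|=|V(G)|-|V(T)|+1=|V(G)|-cc(G)$ by hypothesis. By (\ref{eq:ccbound}) we have $|V(G)|-cc(G)\le Z_+(G)$, so it will suffice to prove that $B$ is a positive zero forcing set whose induced tree cover contains exactly one non-trivial tree: optimality is then automatic, because $Z_+(G)\le |B|=|V(G)|-cc(G)\le Z_+(G)$ gives $|B|=Z_+(G)$.

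The heart of the argument is to run the positive zero forcing process from $B$ and show that it sweeps out $T$ outward from the leaf $b$. I would prove by induction on $k$ that after $k$ rounds the black set is exactly $(V(G)\setminus V(T))\cup\{\,v\in V(T): \operatorname{dist}_T(b,v)\le k\,\}$. The key structural point is that $T=G[V(T)]$ is an \emph{induced} subgraph: hence, after removing the current black vertices, the white vertices (all lying in $V(T)$) induce in $G$ precisely the forest obtained from $T$ by deleting the radius-$k$ ball around $b$, whose components are the subtrees of $T$ rooted at the children — in the $b$-rooted orientation of $T$ — of the vertices at distance exactly $k$. For such a distance-$k$ vertex $a$ and a white component $S$ rooted at a child $c$ of $a$, the only $T$-neighbour of $a$ inside $S$ is $c$; since $T$ is induced, $c$ is therefore the unique white neighbour of $a$ in $G[\,S\cup B\,]$, so the positive colour change rule lets $a$ force $c$. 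Performing these forces for every distance-$k$ vertex and every one of its children advances the ball to radius $k+1$, completing the induction; the base case $k=0$ uses only that the leaf $b$ has a unique $T$-neighbour. When the ball exhausts $V(T)$ every vertex of $G$ is black, so $B$ is indeed a positive zero forcing set.

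Finally I would read off the resulting tree cover. Every force above emanates from a vertex of $T$ and uses an edge of $T$, so the forcing tree rooted at $b$ is exactly $T$ (with all $|V(T)|-1$ of its edges), while no vertex of $V(G)\setminus V(T)$ ever performs a force; thus the tree cover consists of the single non-trivial tree $T$ together with $|V(G)|-cc(G)-1$ trivial trees. Combined with the optimality observation from the first paragraph, this yields the lemma.

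The step I expect to be the main obstacle is the bookkeeping in the inductive round: one must simultaneously verify that the positive colour change rule genuinely applies (each boundary black vertex $a$ sees exactly one white neighbour inside each relevant white component) and that a single vertex $a$ may legitimately force several of its children that lie in distinct components — both of which rest entirely on $T$ being an induced tree, so that $G$ introduces no chords among $V(T)$. The remaining ingredients (the cardinality computation and the appeal to (\ref{eq:ccbound}) for optimality) are routine.
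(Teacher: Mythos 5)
Your proposal is correct and follows essentially the same approach as the paper: colour $V(G)\setminus V(T)$ together with one vertex of $T$ black and invoke (\ref{eq:ccbound}) to conclude optimality from the cardinality count. The paper's proof merely asserts that this set admits a positive zero forcing process with $T$ as the unique non-trivial forcing tree, whereas you supply the (correct) inductive verification, resting on $T$ being induced, that the process sweeps $T$ outward from the chosen leaf.
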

\proof Colour all the vertices in $V(G) \backslash V(T)$ black and
colour exactly one vertex in $T$ black. This set of black vertices
forms a positive zero forcing set for which there is a positive zero
forcing process where $T$ is the only non-trivial tree.  The size of
this positive zero forcing set is $|V(G)| - |V(T)| + 1 = |V(G)| -
cc(G)$.  Thus, by (\ref{eq:ccbound}), this set is an optimal
positive zero forcing set.  \qed

\begin{example}
{\rm To illustrate Lemma \ref{inT}, consider the graph in Figure \ref{fig2}.

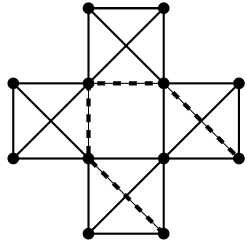
\begin{figure}[h]
\begin{center}
\begin{tikzpicture}

\draw [fill] (1,0)  circle (2pt) node {};
\draw [fill] (2,0)  circle (2pt) node {};

\draw [fill] (0,1)  circle (2pt) node {};
\draw [fill] (1,1)  circle (2pt) node {};
\draw [fill] (2,1)  circle (2pt) node {};
\draw [fill] (3,1)  circle (2pt) node {};

\draw [fill] (0,2)  circle (2pt) node {};
\draw [fill] (1,2)  circle (2pt) node {};
\draw [fill] (2,2)  circle (2pt) node {};
\draw [fill] (3,2)  circle (2pt) node {};

\draw [fill] (1,3)  circle (2pt) node {};
\draw [fill] (2,3)  circle (2pt) node {};

 \draw [thick] (1, 0) -- (2, 0) --(2,1) -- (1,1) -- (1,0) -- (2,1) 
     -- (3,1) -- (3,2) -- (2,2) -- (2,3)--(1,3)--(1,2) -- (0,2)
     -- (0,1) -- (1,1) -- (0,2);

\draw [dashed, ultra thick] (2,0)--(1,1) --(1,2)--(2,2) --(3,1);

\draw (2,0)--(1,1) --(1,2)--(2,2) --(3,1);
 
\draw [thick] (0,1)--(1,2)--(2,3)--(1,3)--(2,2)--(2,1)--(3,2);

\end{tikzpicture}
\end{center}
\caption{An example of a graph $G$ with an induced tree $T$ with
  $|V(T)| - 1 = cc(G)$ } \label{fig2}
\end{figure}

Observe that the clique cover number of $G$ is 4. Consider the induced tree $T$ (based on the
dashed edges in Figure \ref{fig2} containing 5 vertices. Hence $|V(T)|-1 =
  cc(G)$. Following the algorithm in Lemma \ref{inT}, if all remaining vertices plus one 
  are initially coloured black, then $T$ is the only non-trivial tree associated with this
  positive zero forcing tree cover.}
  \end{example}

\section{Cycles of Cliques}

Let $G$ be a graph and assume that $\{C_1, C_2, \dots
,C_k\}$ is a set of maximal cliques in $G$ that covers all the edges
in $G$. We say that $G$ is a \textsl{cycle of cliques} if $V(C_i) \cap
V(C_{j}) \neq \emptyset$ whenever $j = i+1$ or $(i,j) = (k,1)$ and $V(C_i)
\cap V(C_j) = \emptyset$ otherwise.  If $k=1$ then $G$ is a clique; we will
not consider a graph that is a clique to be a cycle of cliques.

\begin{lemma}
  If $G$ is a cycle of cliques $\{C_1, C_2, \dots ,C_k\}$ with $k \geq
  3$, then there is a zero forcing set of size $|V(G)| - (k-2)$ and
  exactly one non-trivial forcing tree.
\end{lemma}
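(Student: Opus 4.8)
The plan is to produce, explicitly, a positive zero forcing set of size $|V(G)|-(k-2)$ together with a positive zero forcing process whose tree cover has exactly one non-trivial tree. (I read ``zero forcing set'' and ``forcing tree'' in the statement as ``positive zero forcing set'' and ``positive zero forcing tree'', as in the preceding sections.) The key object will be an induced path $P$ on $k-1$ vertices formed from the ``connector'' vertices of the cycle of cliques. Once $P$ is in hand, I will colour $V(G)\setminus V(P)$ black together with one endpoint of $P$; this set has $|V(G)|-(k-1)+1=|V(G)|-(k-2)$ vertices, and the natural forcing process marches down $P$ from that endpoint, so the forcing tree rooted at the endpoint is $P$ itself and all other black vertices are trivial trees.

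I would first treat the case $k\ge 4$. For $1\le i\le k-1$ choose $y_i\in V(C_i)\cap V(C_{i+1})$, which is non-empty because consecutive cliques meet. The structural heart of the argument is the claim that, when $k\ge 4$, each $y_i$ belongs to \emph{exactly} the two cliques $C_i$ and $C_{i+1}$ among $C_1,\dots,C_k$: if $y_i\in C_\ell$ for some $\ell\notin\{i,i+1\}$, then $V(C_i)\cap V(C_\ell)\neq\emptyset$ forces $\ell\equiv i-1\pmod k$, while $V(C_{i+1})\cap V(C_\ell)\neq\emptyset$ forces $\ell\equiv i+2\pmod k$, so $k\mid 3$, contradicting $k\ge 4$. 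Two consequences follow immediately. First, the $y_i$ are pairwise distinct: if $y_i=y_j$ then this vertex lies in precisely $\{C_i,C_{i+1}\}$ and in precisely $\{C_j,C_{j+1}\}$, forcing $i=j$. Second, $G[\{y_1,\dots,y_{k-1}\}]$ is \emph{exactly} the path $y_1y_2\cdots y_{k-1}$: consecutive $y_i,y_{i+1}$ are adjacent since both lie in $C_{i+1}$, while an edge $y_iy_j$ with $j>i+1$ would lie in some clique $C_\ell$ of the cover with $\ell\in\{i,i+1\}$ and $\ell\in\{j,j+1\}$, which is impossible. Write $P$ for this induced path.

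Next I would run the forcing process. Put $B:=(V(G)\setminus V(P))\cup\{y_1\}$, so the white set is $\{y_2,\dots,y_{k-1}\}$ and $|B|=|V(G)|-(k-2)$. I claim the forces $y_1\to y_2\to\cdots\to y_{k-1}$ are all legal under the positive colour change rule: after the first $t$ of them the white vertices are $\{y_{t+1},\dots,y_{k-1}\}$, which induce the connected subpath $y_{t+1}\cdots y_{k-1}$, so there is a single white component $W$, its union with the current black set is all of $V(G)$, and in $G$ the only white neighbour of $y_t$ is $y_{t+1}$ because $P$ is induced and every neighbour of $y_t$ outside $P$ already lies in $B$; hence $y_t$ forces $y_{t+1}$. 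After $k-2$ steps every vertex is black, so $B$ is a positive zero forcing set, and the forcing tree rooted at $y_1$ is precisely $P$ while each vertex of $B\setminus\{y_1\}$ is a trivial tree; this tree cover has exactly one non-trivial tree. The case $k=3$ is easy and should be handled separately: here $k-1=2$, so I take any edge $\{u,v\}$ of $G$ (one exists since each $C_i$ has at least two vertices) and the black set $(V(G)\setminus\{u,v\})\cup\{u\}$ of size $|V(G)|-1$; then $v$ is the only white vertex, its component is $\{v\}$, the unique white neighbour of $u$ is $v$, and $u$ forces $v$, so $\{u,v\}$ is the only non-trivial forcing tree.

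I expect the main obstacle to be the structural step for $k\ge 4$: verifying that the chosen connector vertices really do induce a path on $k-1$ vertices, i.e.\ the claim that each $y_i$ meets only two of the cliques and the consequent absence of ``chords'' $y_iy_j$. This is precisely where the hypothesis $k\ge 4$ is used (through $k\nmid 3$), which is why $k=3$ needs its own short treatment. Once the induced path is available, checking that the one-at-a-time chain of forces clears $G$, and bookkeeping the size $|V(G)|-(k-2)$ and the single non-trivial tree, is routine.
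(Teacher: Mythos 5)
Your proof is correct and takes essentially the same route as the paper's: pick one connector vertex in each consecutive intersection $V(C_i)\cap V(C_{i+1})$, colour all but one of these chain vertices white and everything else black, and force down the chain so that the unique non-trivial forcing tree is a path on $k-1$ vertices. You supply details the paper leaves implicit (distinctness of the connectors, that they induce a path, and the separate $k=3$ case), but the underlying construction is identical up to a cyclic relabelling of which intersection hosts the root.
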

\begin{proof}

To prove this we simply construct a zero forcing set that has this property.
Colour exactly one vertex in $V(C_i) \cap V(C_{i+1})$ white for $i = 1,
\dots , k-2$ and colour all other vertices in $G$ black. This set of
black vertices forms a zero forcing set of size  $|V(G)| - (k-2)$.

Start with any vertex in $V(C_k) \cap V(C_1)$, since all the vertices in
$V(C_k)$ are black, this vertex can force the only one white vertex in $V(C_1)
\cap V(C_2)$. In turn, this new black vertex can force the remaining one white vertex in $V(C_2)
\cap V(C_3)$, which in turn forces the only one white vertex in $C_3 \cap C_4$.
Continuing like this we see that the claim holds.
\end{proof}

If $G$ is a cycle of cliques, then the cliques $\{C_1, C_2, \dots
,C_k\}$ form a clique cover of $G$. So we have that
\[
|V(G)| - |\CC(G)| =  |V(G)| - k \leq Z_+(G) \leq Z(G) \leq |V(G)| - k+2.
\]
Observe that the positive zero forcing sets in the previous lemma may not always be optimal
positive zero forcing sets.  But, in some cases it is possible to
find an optimal zero forcing set for a cycle of cliques that has
exactly one non-trivial forcing tree and is also an optimal positive
zero forcing set.

\begin{lemma}
  Assume that $G$ is a graph that is a cycle of cliques $\{C_1, C_2,
  \dots ,C_k\}$ with $k \geq 3$. Further assume that there 
  is a vertex $x \in V(C_1)$ that is in no other clique 
  and a vertex $y \in V(C_k)$ but is not in any other clique.
  Then there is an optimal
  positive zero forcing set of size $|V(G)|-k$ with exactly one non-trivial
  forcing tree.
\end{lemma}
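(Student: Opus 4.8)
The plan is to reduce the statement to Lemma~\ref{inT} by exhibiting an induced tree $T$ in $G$ on exactly $k+1$ vertices. Granting such a $T$, I would colour the vertices of $V(G)\setminus V(T)$ together with a single vertex of $T$ black, exactly as in the proof of Lemma~\ref{inT}; that construction produces a valid positive zero forcing set with $T$ as its unique non-trivial forcing tree for \emph{any} induced tree $T$, regardless of the value of $cc(G)$ (the root in $T$ forces along $T$ because $T$ is induced and every vertex outside $T$ is already black). This set has size $|V(G)|-|V(T)|+1 = |V(G)|-k$. Since $\{C_1,\dots,C_k\}$ covers all edges of $G$ we have $cc(G)\le k$, so by~(\ref{eq:ccbound}) $Z_+(G)\ge |V(G)|-cc(G)\ge |V(G)|-k$; hence the set is optimal (and in fact $cc(G)=k$). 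So everything comes down to constructing the induced tree.

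For $k\ge 4$ I would take the natural path. For $1\le i\le k-1$ choose $w_i\in V(C_i)\cap V(C_{i+1})$, and set $w_0:=x\in V(C_1)$, $w_k:=y\in V(C_k)$. Consecutive pairs among $w_0,w_1,\dots,w_k$ are adjacent because $w_0,w_1$ both lie in $C_1$, each $w_{i-1},w_i$ both lie in $C_i$ for $2\le i\le k-1$, and $w_{k-1},w_k$ both lie in $C_k$. The real point is that non-consecutive pairs are non-adjacent, and this is where the hypotheses enter: $x$ lies only in $C_1$, $y$ lies only in $C_k$, and, crucially, in a cycle of cliques of length $k\ge 4$ a vertex of $V(C_i)\cap V(C_{i+1})$ lies in no clique other than $C_i$ and $C_{i+1}$ (a third clique $C_\ell$ containing it would have to meet both $C_i$ and $C_{i+1}$, i.e.\ be cyclically adjacent to both, which is impossible when $k\ge 4$). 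Consequently any two of the $w$'s with non-consecutive indices share no common clique, and since every edge of $G$ lies inside some $C_\ell$, they are non-adjacent; this also forces the $w$'s to be pairwise distinct. Hence $T=w_0w_1\cdots w_k$ is an induced path on $k+1$ vertices.

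The case $k=3$ must be handled by hand, and I expect this to be the main obstacle: for $k=3$ all three cliques pairwise intersect and a transition vertex may lie in all of $C_1,C_2,C_3$, so the path above can pick up a chord. I would argue as follows. If $C_1\cap C_2\not\subseteq C_3$ and $C_2\cap C_3\not\subseteq C_1$, pick $w_1\in (C_1\cap C_2)\setminus C_3$ and $w_2\in(C_2\cap C_3)\setminus C_1$; then $x\,w_1\,w_2\,y$ is again an induced path on $4$ vertices. Otherwise, say $C_1\cap C_2\subseteq C_3$; then $C_1\cap C_2\subseteq C_1\cap C_2\cap C_3$, which is therefore non-empty, and moreover $C_2\not\subseteq C_1\cup C_3$, since $C_2\subseteq C_1\cup C_3$ together with $C_1\cap C_2\subseteq C_3$ would give $C_2\subseteq C_3$, contradicting the maximality of $C_2$. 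Choosing $c\in C_1\cap C_2\cap C_3$ and $w\in C_2\setminus(C_1\cup C_3)$, the vertices $x,c,y,w$ induce a star with centre $c$ (using again that $x$ is private to $C_1$ and $y$ private to $C_k$ to rule out edges between the leaves), an induced tree on $4$ vertices. Either way we obtain the required induced tree on $k+1=4$ vertices, and the argument of the first paragraph finishes the proof.

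In summary, the forcing and optimality parts are essentially free (Lemma~\ref{inT} together with the trivial bound $cc(G)\le k$), and the entire weight of the proof is the combinatorial claim that a cycle of cliques with a vertex private to $C_1$ and a vertex private to $C_k$ contains an induced tree on $k+1$ vertices: routine for $k\ge 4$ via the path through the consecutive intersections, but needing the small case analysis above when $k=3$.
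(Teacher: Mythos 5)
Your proposal is correct, and for $k\ge 4$ it builds exactly the same object as the paper: the path $x,w_1,\dots,w_{k-1},y$ through the consecutive intersections, with everything off the path black. The differences are in packaging and in rigour. The paper exhibits the forcing chain directly ($x$ forces the white vertex of $C_1\cap C_2$, which forces the white vertex of $C_2\cap C_3$, etc.) and then invokes $cc(G)=k$ with (\ref{eq:ccbound}); you instead route the argument through Lemma~\ref{inT}, reducing everything to producing an induced tree on $k+1$ vertices and getting the forcing and the optimality for free. More importantly, your treatment of $k=3$ is not just extra care --- it repairs a genuine gap in the paper's proof. When $k=3$ the consecutive intersections can overlap (a transition vertex may lie in all three cliques), and if, say, $C_2\cap C_3\subseteq C_1$, then \emph{every} admissible choice of white vertex in $C_2\cap C_3$ is adjacent to $x$, the three white vertices lie in a single component, and no vertex can perform a force: the paper's prescribed set is then not even a positive zero forcing set (e.g.\ $C_1=\{x,a,b\}$, $C_2=\{a,b,w\}$, $C_3=\{a,y\}$). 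Your case analysis --- an induced path when both consecutive intersections have points outside the third clique, and otherwise an induced star centred at a vertex of $C_1\cap C_2\cap C_3$ with leaves $x$, $y$, and a vertex of $C_2$ private to $C_2$ --- correctly covers this situation, and your maximality argument for the existence of the private vertex of $C_2$ is sound. So: same construction where the paper's works, a necessary and correct supplement where it does not.
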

\begin{proof}

  Colour exactly one vertex in $V(C_i) \cap V(C_{i+1})$ white for each of $i = 1,
  \dots ,k-1$, also colour the vertex $y \in V(C_k)$ white and then
  colour all other vertices in $G$ black. 

The vertex $x \in V(C_1)$ can force the one white vertex in $V(C_1) \cap V(C_2)$. In
turn, this vertex can force the one white vertex in $V(C_2) \cap V(C_3)$,
which in turn forces the one white vertex in $V(C_3) \cap V(C_4)$.  Continue
like this until the one white vertex in $V(C_{k-1})\cap V(C_{k})$ is forced to
black. This vertex can then force $y$ to be black.

Thus this set is a zero forcing set of size $|V(G)| - k$ that has only
one non-trivial zero forcing tree. Since the clique cover number for
this graph is $k$, from (\ref{eq:ccbound}) we have that this is
an optimal zero forcing set and an optimal positive zero forcing set.
\end{proof}

This also gives a family for which the positive zero forcing number
and the forcing number agree.

\section{Further Work}

The complexity of computing any type of graph parameter is an interesting task. For zero forcing
parameters it is known that the problem of finding $Z(G)$ for a graph $G$
is NP-complete. We also suspect that the same is true for computing  $Z_{+}(G)$ 
for a general graph $G$. In fact, we resolve part of this conjecture, by assuming an additional property on
the nature of the zero forcing tree cover that results. However, for chordal graphs $G$, we have 
verified that determining the exact value of $Z_{+}(G)$ can be accomplished via a linear time 
algorithm; the best possible situation. We also believe that it would be
interesting to consider the
complexity of determine $Z_+(G)$ when $G$ is a partial 2-tree. Since for 2-trees it is known that 
the positive zero forcing number is equal to the tree cover number.

A solution of the Min-Forest problem describes an inner structure
between maximal cliques of $G$.  In Section 5, we highlight a couple of instances where
upon if we restrict the number of non-trivial trees in a positive zero forcing 
tree cover of a given size, then conclusions concerning the complexity of computing the positive zero forcing
number can be made. We are interested in exploring this notion
further.


\end{document}